\renewcommand{\epsilon}{\varepsilon}
\DeclareMathOperator{\dvg}{div} 
\DeclareMathOperator{\dist}{dist}
\DeclareMathOperator{\area}{Area}
\def\R{\mathbb{R}}
\def\N{\mathbb{N}}
\def\Z{\mathbb{Z}}
\def\d{\delta}
\def\a{\alpha}
\def\e{\epsilon}
\def\t{\tau}
\def\l{\lambda}
\def\k{\kappa}
\def\H{\mathcal{H}}
\def\wt{\widetilde}
\newtheorem{theorem}{Theorem}[section]
\newtheorem{lemma}[theorem]{Lemma}
\newtheorem{claim}[theorem]{Claim}
\newtheoremstyle{TheoremNum}
        {\topsep}{\topsep}              %%% space between body and thm
        {\itshape}                      %%% Thm body font
        {}                              %%% Indent amount (empty = no indent)
        {\bfseries}                     %%% Thm head font
        {.}                             %%% Punctuation after thm head
        { }                             %%% Space after thm head
        {\thmname{#1}\thmnote{ \bfseries #3}}%%% Thm head spec
    \theoremstyle{TheoremNum}
\title[Convex ancient solutions to CSF]{Convex ancient solutions to curve shortening flow}
\author{Theodora Bourni}
\author{Mat Langford}
\author{Giuseppe Tinaglia}
\address{Department of Mathematics, University of Tennessee Knoxville, Knoxville TN, 37996-1320}
\email{tbourni@utk.edu}
\email{mlangford@utk.edu}
\address{Department of Mathematics, King's College London, London, WC2R 2LS, U.K.}
\email{giuseppe.tinaglia@kcl.ac.uk}
\begin{document}

\begin{abstract}
We show that the only convex ancient solutions to curve shortening flow are the stationary lines, shrinking circles, Grim Reapers and Angenent ovals, completing the classification initiated by Daskalopoulos, Hamilton and \v Se\v sum and X.-J.~Wang. %Our main tool is the area formula for a trapezium.
\end{abstract}

\maketitle
\thispagestyle{empty}

\section{Introduction}

A smooth one-parameter family $\{\Gamma_t\}_{t\in I}$ of connected, immersed planar curves $\Gamma_t\subset \R^2$ \emph{evolves by curve shortening flow} if
\[
\partial_t\gamma(\theta,t)=\vec\kappa(\theta,t)\;\;\text{for each}\;\; (\theta,t)\in \Gamma\times I
\]
for some smooth family $\gamma:\Gamma\times I\to\R^2$ of immersions $\gamma(\cdot,t):\Gamma\to\R^2$ of $\Gamma_t$, where $\vec\kappa(\cdot,t)$ is the curvature vector of $\gamma(\cdot,t)$. %and $\Gamma$ is diffeomorphic to either $\R$ or $S^1$. 
The solution $\{\Gamma_t\}_{t\in I}$ is called \emph{ancient} if $I$ contains the interval $(\infty,t_0)$ for some $t_0\in\R$, which we may, without loss of generality, take to be zero. We refer to a solution as \emph{compact} if $\Gamma\cong S^1$, \emph{convex} if the timeslices $\Gamma_t$ each bound convex domains (in which case the immersions $\gamma(\cdot,t)$ are proper embeddings), \emph{locally uniformly convex} if the curvature $\kappa$ is always positive and \emph{maximal} if it cannot be extended forwards in time.

The following families of curves constitute maximal convex ancient solutions to curve shortening flow.
\begin{itemize}
\item[--] The \emph{stationary line} $\{\mathrm{L}_t\}_{t\in(-\infty,\infty)}$, where $\mathrm{L}_t:=\{(x,0):x\in\R\}$.
\item[--] The \emph{shrinking circle} $\{S^1_{\sqrt{-2t}}\}_{t\in(-\infty,0)}$.
\item[--] The \emph{Grim Reaper} $\{\mathrm{G}_t\}_{t\in(-\infty,\infty)}$, where $\mathrm{G}_t:=\{(x,y):\cos x=\mathrm{e}^{t-y}\}$ \cite{Mullins}.
\item[--] The\! \emph{Angenent oval} $\{\mathrm{A}_t\}_{t\in(-\infty,0)}$,\! where\! $\mathrm{A}_t:=\{(x,y)\!:\!\cos x=\mathrm{e}^{t}\cosh y\}$ \cite{Ang92}.
%\item[--] The image of a given convex ancient solution under similarity transformations (spacetime translation, spatial rotation and parabolic rescaling).
%\item[--] The restriction of a given convex ancient solution to a smaller time interval.
\end{itemize}

We will prove that the aforementioned examples are the only ones possible (modulo spacetime translation, spatial rotation and parabolic rescaling).

\begin{theorem}\label{thm:main}
The only convex ancient solutions to curve shortening flow are the stationary lines, shrinking circles, Grim Reapers and Angenent ovals.
\end{theorem}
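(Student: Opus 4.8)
**The plan is to reduce everything to the classification already known for locally uniformly convex compact and noncompact ancient solutions, by showing that (i) any convex ancient solution that is not locally uniformly convex must be a stationary line, and (ii) any locally uniformly convex ancient solution falls into one of the known cases.**

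For step (i), suppose $\{\Gamma_t\}$ is convex ancient but $\kappa$ vanishes somewhere at some time. By the strong maximum principle applied to the evolution equation $\partial_t\kappa=\kappa_{ss}+\kappa^3$ (where $s$ is arclength), if $\kappa=0$ at one interior point at some time then $\kappa\equiv0$ on the whole curve for all earlier times, hence for all times by backward uniqueness (or by a direct connectedness/analyticity argument). A connected embedded curve with $\kappa\equiv0$ that is ancient is a straight line, giving the stationary line. So from now on the solution is locally uniformly convex, i.e. each $\Gamma_t$ is a locally convex curve with $\kappa>0$.

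For step (ii), first dispose of the compact case: a compact locally uniformly convex ancient solution is, by the theorem of Daskalopoulos--Hamilton--\v Se\v sum \cite{Ang92}-adjacent work, either the shrinking circle or the Angenent oval. It remains to treat the noncompact locally uniformly convex case, where the theorem of X.-J.~Wang applies: such a solution either sweeps out all of $\R^2$ (a full entire graph situation, which is excluded for convex curves other than lines) or is contained in a slab of some width $w$. If the width is infinite the curve is a line (already handled); if $w<\infty$, one studies the limiting behaviour as $t\to-\infty$ and as $t\to+\infty$. The key is a monotonicity/rescaling argument: rescale parabolically and extract limits, using the differential Harnack inequality of Hamilton to control $\kappa$, and show the backward limit is a Grim Reaper of width $w$ and that the solution itself must then coincide with a (translating) Grim Reaper. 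One encodes the geometry via the support function or, following Angenent, via the angle $\theta$ as the parameter, so that $\kappa=\kappa(\theta,t)$ satisfies a nice quasilinear parabolic equation on a fixed domain, and one runs a maximum-principle comparison against the explicit Grim Reaper and Angenent oval profiles.

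**The main obstacle I expect is the noncompact slab case: ruling out any locally uniformly convex convex ancient solution in a slab other than the Grim Reaper.** The delicate points are (a) establishing sharp asymptotics of the solution at the two ends of the slab and as $t\to\pm\infty$ — in particular showing the backward limit is exactly the Grim Reaper rather than some a priori more exotic eternal solution — and (b) upgrading convergence of rescalings to an actual identification of the solution, which requires either a rigidity statement for the Grim Reaper as an eternal solution attaining equality in Hamilton's Harnack inequality, or a careful uniqueness argument for the PDE satisfied by $\kappa(\theta,t)$ with the prescribed slab boundary behaviour. Handling the transition between the compact and noncompact regimes — e.g.\ ensuring a sequence of Angenent-oval-like solutions cannot degenerate into something new — will also need care with the curvature bounds coming from the Harnack inequality.
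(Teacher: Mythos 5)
Your decomposition is essentially the paper's: degenerate curvature forces a stationary line via the strong maximum principle applied to $\partial_t\kappa=\kappa_{ss}+\kappa^3$; the compact case can be disposed of by citing Daskalopoulos--Hamilton--\v Se\v sum \cite{DHS}; and Wang's dichotomy together with his classification of entire solutions reduces everything to classifying locally uniformly convex noncompact solutions lying in a strip. All of that is legitimate provided you genuinely invoke \cite{DHS} and \cite{Wa11}. One caution: a noncompact convex ancient solution sweeping out all of $\R^2$ is \emph{not} ``excluded for convex curves other than lines'' by anything obvious --- ruling out entire solutions other than the shrinking circle is precisely Wang's Theorem 1.1, which requires the monotonicity formula, a blow-down classification, and the concavity of the arrival time (the paper reproves this in its Section 4). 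Also, the extension of $\kappa\equiv 0$ to later times is by forward, not backward, uniqueness, though this is a minor slip.

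The genuine gap is the noncompact strip case, which is the paper's main new content and which you explicitly leave as ``the main obstacle I expect.'' Your sketch --- backward rescaled limits are Grim Reapers by Harnack rigidity, then ``a maximum-principle comparison against the explicit Grim Reaper'' --- omits the step that makes any such comparison possible: one must first show that the asymptotic Grim Reaper at the tip has the \emph{full} width of the strip (the paper's $r=1$). A priori the tip could translate with asymptotic speed $r^{-1}>1$ and the limit could be a narrower Grim Reaper, in which case no comparison with the unit-speed Grim Reaper of the ambient strip closes. The paper settles this with an area argument: the area enclosed between $\Gamma_t$ and the $x$-axis satisfies $\area(t)\le -\pi t$ because the total turning between the two axis crossings is at most $\pi$, whereas if $r<1$ the trapezium spanned by two points near the tip (at depth roughly $-r^{-1}t$) and the two axis crossings (which approach the strip walls) has area roughly $\tfrac{1}{2}\pi(1+r)r^{-1}(-t)>\pi(-t)$, a contradiction. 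Even after $r=1$, identifying the solution requires the Alexandrov reflection symmetry about the mid-line of the strip and a two-sided comparison of the displacement $\ell(t)+t$ with the Grim Reaper, applied both to $\Gamma_t$ and to its time translates $\Gamma_{t+\tau}$, before the avoidance and strong maximum principles yield equality rather than one-sided domination. None of these steps appears in your proposal, so as written it is a plan with the decisive arguments missing rather than a proof.
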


Theorem~\ref{thm:main} completes  the classification of convex ancient solutions to curve shortening flow initiated by Daskalopoulos, Hamilton and \v Se\v sum~\cite{DHS} and X.-J.~Wang~\cite{Wa11}. Indeed, Daskalopoulos, Hamilton and \v Se\v sum showed that the shrinking circles and the Angenent ovals are the only \emph{compact} examples \cite{DHS}. Their arguments are based on the analysis of a certain Lyapunov functional. On the other hand, Wang's results imply, in particular, that a convex ancient solution $\{\Gamma_t\}_{t\in(-\infty,0)}$ must either be entire (i.e. sweep out the whole plane, in the sense that $\cup_{t<0}\Omega_t=\R^2$, where $\Omega_t$ is the convex body bounded by $\Gamma_t$) or else lie in a  strip region (the region bounded by two parallel  lines) \cite[Corollary 2.1]{Wa11}. He also proved that the only entire examples are the shrinking circles~\cite[Theorem 1.1]{Wa11}. His arguments are based primarily on the concavity of the arrival time function (whose level $t$ set is the curve $\Gamma_t$) \cite[Lemmas 4.1 and 4.4]{Wa11}.
%We refer to restrictions to shorter time intervals $(-\infty,t_0)$, $t_0<\infty$, and images under similarity transformations (spacetime translation, spatial rotation and parabolic rescaling) of %the stationary straight line, the shrinking circle, the Grim Reaper and the Angenent oval
%these examples  as \emph{stationary lines}, \emph{shrinking circles}, \emph{Grim Reapers} and \emph{Angenent ovals}, respectively. Of course, by the invariance properties of curve shortening flow, these are all ancient solutions as well.
%
%If $\{\Gamma_t\}_{t\in(-\infty,0)}$ is not a stationary line, then the embeddings $\gamma(\cdot,t)$ are locally uniformly convex. This follows by applying the strong maximum principle to the evolution equation for the curvature \cite{Ga84}.
%\[
%\kappa_t=\kappa_{ss}+\kappa^3\,,
%\]
%where the subscripts denote the corresponding derivatives, $s$ is an arc-length parameter and, denoting by $\nu$ the outward pointing unit normal vector field, $\kappa=-\vec\kappa\cdot \nu$ is the curvature of the solution.

%The Angenent oval\footnote{Also known, particularly in the physics community, as the \emph{paperclip} solution \cite{Bakas2}.} is the family of curves $\{\Lambda_t\}_{t<0}$ defined by $\Lambda_t:=\{(x,y)\in \R^2:\cos x=\mathrm{e}^t\cosh y\}$ \cite{ang88}. 

%\begin{center}
%\begin{figure}[h]
%\includegraphics[width=0.6\textwidth]{../Figures/Angenent_oval.png}
%\caption{Angenent oval shrinking to a round point as $t\nearrow 0$.% The $x$-axis is vertical in this picture.
%}
%\end{figure}
%\end{center}

%The classification is completed in Section \ref{sec:noncompact}, where 

In fact, we provide a new, self-contained proof of the full classification result of Theorem \ref{thm:main}: In Section \ref{sec:noncompact}, we present a simple geometric argument which shows that the Grim Reapers are the only noncompact examples, other than the stationary lines, which lie in strip regions. The argument is based on novel techniques that were developed by the authors to construct and classify new examples of ancient solutions to mean curvature flow which lie in slab regions~\cite{BLT1,BLT2}. In Section \ref{sec:compact}, we use the same method to provide a simple new proof that the Angenent ovals are the only compact examples which lie in strip regions. In the fourth and final section, we prove that a convex ancient solution which is not a shrinking circle necessarily lies in a strip region, partly following Wang's original argument.

%Our proof of the classification in the compact case uses different geometric arguments from the ones introduced by Daskalopoulos, Hamilton and \v Se\v sum. 
%Our arguments are based on novel techniques that we have developed to construct and classify new examples of ancient solutions to mean curvature flow which lie in slab regions~\cite{BLT1,BLT2}. %Similarly, our paper provides a simplified proof of Wang's result that a convex ancient solution which is not a shrinking circle necessarily lies in a stationary strip region~\cite{Wa11}.

\section{The noncompact case}\label{sec:noncompact}

Consider first a convex ancient solution  $\{\Gamma_t\}_{t\in (-\infty, 0)}\subset \R^2$ which lies in the strip $\Pi:=\{(x,y):|x|<\pi/2\}$ and in no smaller strip but is \emph{not compact}. By applying the strong maximum principle to the evolution equation for the curvature \cite{Ga84}, we find that $\{\Gamma_t\}_{t\in (-\infty, 0)}$ is either a stationary line or else locally uniformly convex. We henceforth assume the latter. Denote by $\theta$ the turning angle of the solution (the angle made by the $x$-axis and its tangent vector with respect to a counterclockwise parametrization). Since the solution is convex and lies in the strip $\Pi$, we can arrange (by reflecting across the $x$-axis if necessary) that the Gauss image $\theta(\Gamma_t)$ is the interval $(-\frac{\pi}{2},\frac{\pi}{2})$ for all $t<0$. Denote by $\gamma:(-\frac{\pi}{2},\frac{\pi}{2})\times(-\infty,0)\to\R^2$ the turning angle parametrization and set $p(t)=\gamma(0,t)$. By translating vertically, we can arrange that $y(p(0))=0$.

We begin with some basic asymptotics.

\begin{lemma}\label{lem:asymptotic translator noncompact}
The translated family $\{\Gamma^s_{t}\}_{t\in(-\infty,-s)}$ defined by $\Gamma^s_{t}:=\Gamma_{t+s}-p(s)$ converges locally uniformly in the smooth topology as $s\to-\infty$ to the Grim Reaper $\{r\mathrm{G}_{r^{-2}t}\}_{t\in(-\infty,\infty)}$, where %$\{G_t\}_{t\in(-\infty,0)}$ is the (standard) Grim Reaper and
$r:=\lim_{s\to-\infty}\kappa^{-1}(0,s)$.
\end{lemma}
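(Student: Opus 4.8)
The plan is to extract a limit flow along the sequence $s\to-\infty$ using standard compactness, identify it as a translator using a monotonicity/rigidity argument, and then pin down that the translator is a (rescaled) Grim Reaper with the stated speed. First I would establish the requisite a priori estimates on the translated flows $\{\Gamma^s_t\}$. Since each $\Gamma^s_t$ is convex and lies in the strip $\Pi$ (translated so that $\gamma(0,s)$ sits at the origin), the flows are uniformly confined to a fixed strip; convexity together with the strip confinement gives, via the turning-angle parametrization, uniform control of the curvature on compact subsets of the Gauss image away from $\theta=\pm\pi/2$ (e.g.\ from the interior gradient/curvature estimates for convex curve shortening flow, or from Hamilton's Harnack inequality, which holds since each $\Gamma^s_t$ is a complete, locally uniformly convex ancient solution). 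With such bounds, the Arzel\`a--Ascoli theorem and parabolic regularity theory (bootstrapping the curve shortening equation written as a graph or in the turning-angle gauge) yield a subsequence along which $\{\Gamma^s_t\}$ converges in $C^\infty_{\loc}$ to a limiting convex ancient solution $\{\Gamma^\infty_t\}_{t\in(-\infty,\infty)}$ which again lies in $\Pi$, is locally uniformly convex (since $\kappa(0,s)\to r^{-1}\in(0,\infty)$ by the definition of $r$, so the limit curvature at the origin is $r^{-1}>0$ and the strong maximum principle propagates positivity), and passes through the origin with $\theta=0$ there at time $0$.

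Next I would show the limit is a translator. The natural mechanism is monotonicity of the quantity $\kappa^{-1}(0,s)$: I would argue that $s\mapsto\kappa(0,s)$ is monotone (curvature is pointwise decreasing along the turning-angle parametrization for convex curve shortening flow, by the evolution equation $\partial_t\kappa=\kappa^2\kappa_{\theta\theta}+\kappa^3$ combined with convexity and the ancient/strip hypotheses — this is where I expect to lean on Hamilton's Harnack estimate $\partial_t\kappa+\tfrac{\kappa}{2(t-t_0)}\ge 0$ applied in the ancient limit $t_0\to-\infty$ to get $\partial_t\kappa\ge 0$, or the reverse, depending on orientation), hence $\kappa(0,s)$ has a limit as $s\to-\infty$, namely $r^{-1}$. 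Then the rigidity case of Hamilton's Harnack inequality forces the limit flow, on which $\partial_t\kappa\equiv 0$ at the spatial origin and which is an ancient solution attaining equality, to be a self-similarly translating soliton. Equivalently, one can phrase this via a monotonicity formula for the translator: the limit of the translated flows has a time-translation symmetry because $p(s+\sigma)-p(s)\to$ a fixed vector after passing to the limit, for each fixed $\sigma$, so $\{\Gamma^\infty_{t+\sigma}\}$ is a rigid translate of $\{\Gamma^\infty_t\}$ for all $\sigma$, i.e.\ $\Gamma^\infty_t$ moves by translations; combined with strip confinement this is exactly a translating soliton.

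Finally, I would classify this translator. A convex, locally uniformly convex translating soliton to curve shortening flow contained in a strip is, up to the symmetries of the equation, the Grim Reaper (this is classical — the ODE for a graphical translator, $\kappa = \langle \mathbf{V}, \mathbf{n}\rangle$ with translation velocity $\mathbf{V}$, has the $\cos$ solution as its unique complete convex solution, and the strip it occupies determines the scaling). Matching normalizations: our limit passes through the origin with unit-length Gauss image $(-\pi/2,\pi/2)$ and curvature $r^{-1}$ at the apex $\theta=0$, which identifies it precisely with $r\mathrm{G}_{r^{-2}t}$ (the standard Grim Reaper $\mathrm{G}_t$ has apex curvature $1$ and width $\pi$; scaling by $r$ in space and $r^2$ in time gives apex curvature $r^{-1}$ and width $\pi r$ — and one checks the strip $\Pi$ has width $\pi$ after the curve is renormalized, so actually $r$ is determined by the strip, consistently with $r=\lim\kappa^{-1}(0,s)$). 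Since the limit is independent of the subsequence (the speed $r^{-1}$ and the normalization are fixed), full convergence as $s\to-\infty$ follows.

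The main obstacle I anticipate is the passage to the translator: getting genuine monotonicity of $\kappa(0,s)$ (or an equivalent almost-monotone quantity) and then invoking the correct rigidity statement cleanly, rather than just compactness. Establishing the a priori estimates and the final ODE classification are routine; the "why is the backward limit a soliton" step is the crux, and Hamilton's Harnack inequality (or a suitable Huisken-type monotonicity formula adapted to the ancient, strip-confined setting) is the tool I would reach for.
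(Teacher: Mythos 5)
Your proposal follows essentially the same route as the paper: Harnack monotonicity to obtain the limit $\kappa_\infty=\lim_{s\to-\infty}\kappa(0,s)$, compactness and parabolic bootstrapping for a subsequential eternal limit flow in the same strip, the rigidity case of the differential Harnack inequality (constancy of $\kappa$ at $\theta=0$ in the limit) to show the limit translates, the classification of convex translators to identify the Grim Reaper, and uniqueness of the limit to upgrade to full convergence. There is, however, one genuinely circular step: you assert that $\kappa(0,s)\to r^{-1}\in(0,\infty)$ ``by the definition of $r$'', but the definition only guarantees that the (possibly infinite) limit $r$ exists; you must separately rule out $\kappa_\infty=0$, i.e.\ that the limit flow degenerates to a stationary line, and the strong maximum principle cannot ``propagate positivity'' from the origin until you know the curvature there is positive. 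The paper closes this by a geometric observation: the limit's normal at the origin at time zero is parallel to the strip (the tangent is horizontal since $\theta=0$ at $p(s)$), whereas any stationary line contained in $\Pi$ must be parallel to $\partial\Pi$, hence have horizontal normal; so the weakly convex limit cannot be a line and is therefore locally uniformly convex, giving $\kappa_\infty>0$. Without this (or a substitute), the identification of the limit as a Grim Reaper of apex curvature $r^{-1}$ is unjustified. Two smaller points: your parenthetical that ``$r$ is determined by the strip'' is premature --- at this stage the limit Grim Reaper is only known to lie in a strip of width $\pi r\le\pi$, and the equality $r=1$ is the content of the \emph{next} lemma, proved by the area comparison; and your ``equivalent'' argument that $\Gamma^\infty_{t+\sigma}$ is a rigid translate of $\Gamma^\infty_t$ presupposes uniqueness of the subsequential limit, which is not yet available, so the Harnack rigidity should be kept as the actual mechanism.
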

\begin{proof}
Since the solution is locally uniformly convex, the differential Harnack inequality \cite{Ham95} implies that the curvature is non-decreasing in time with respect to the arc-length parametrization. In particular, the limit $\kappa_\infty:=\lim_{s\to-\infty}\kappa(0,s)$ exists. Since each translated solution contains the origin at time zero, and the curvature is uniformly bounded on compact subsets of spacetime, it follows from standard bootstrapping results that a subsequence of each of the families converges locally uniformly in the smooth topology to a weakly convex eternal limit flow lying in a strip of the same width. By the strong maximum principle, the limit must be locally uniformly convex, since its normal at the origin at time zero is parallel to the strip (which rules out a stationary line as the limit). It follows that $\kappa_\infty$ is positive. Since the curvature of the limit is constant in time with respect to the turning angle parametrization, the rigidity case of the differential Harnack inequality implies that it moves by translation. Since the Grim Reapers are the only translating solutions to curve shortening flow other than the stationary lines \cite{Mullins}, we conclude that the limit is the Grim Reaper with bulk velocity $v:=\kappa_\infty e_2$. The claim follows since the limit is independent of the subsequence.
\end{proof}
%Thus,
%\[
%\gamma^s(\theta,t)\stackrel{s\to\infty}{\longrightarrow}\gamma_{r}(\theta)+r^{-1}te_2
%\]
%locally uniformly in the smooth topology for some $r\in(0,1]$, where
%\[
%\gamma_r(\theta)=r(\theta,-\log\cos\theta)\,.
%\]

\begin{lemma}\label{lem:sweeps out slab noncompact}
The solution sweeps out all of $\Pi$; that is, $\Pi=\cup_{t<0}\Omega_t$, where $\Omega_t$ is the convex region bounded by $\Gamma_t$. 
\end{lemma}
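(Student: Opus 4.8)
The plan is to show that the convex region swept out by the solution, $\W_\infty:=\cup_{t<0}\W_t$, is both open and (relatively) closed in $\Pi$ and nonempty, hence all of $\Pi$ by connectedness. Nonemptiness and openness are immediate: each $\W_t$ is open, the family is nested increasing in $t$ (the curves move inward in the direction of curvature, so $\W_{t_1}\subset\W_{t_2}$ for $t_1<t_2$), and $\W_\infty$ is therefore an increasing union of nonempty open sets.

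The substance is to show $\W_\infty$ is closed in $\Pi$, equivalently that the solution escapes to spatial infinity in the (vertical) strip direction as $t\to-\infty$ — concretely that $y(p(t))\to+\infty$ (or $-\infty$) as $t\to-\infty$, with the curves retreating to $y=+\infty$ at the bottom and filling the strip. The tool for this is Lemma~\ref{lem:asymptotic translator noncompact}: the rescaled-and-recentered flows $\Gamma^s_t=\Gamma_{t+s}-p(s)$ converge as $s\to-\infty$ to a Grim Reaper moving with bulk velocity $v=\kappa_\infty e_2$ (up to the rescaling by $r$). In particular $\kappa(0,s)=\kappa^{-1}$-speed of $p$ stays bounded below by a positive constant for $s$ very negative, so $|\dot p(s)|\ge c>0$ eventually; since by convexity $p(s)$ moves monotonically in the $e_2$ direction (the inward normal at $\theta=0$ points into the half of the strip the curve bounds, and this direction cannot reverse for a convex ancient solution retreating in a fixed strip), we get $|y(p(s))|\to\infty$ as $s\to-\infty$.

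First I would make the monotonicity of $t\mapsto\W_t$ precise and record that $\W_\infty$ is a convex open subset of $\Pi$; being convex and contained in the strip, its complement in $\Pi$ (if nonempty) is either a sub-strip, a half-strip, or bounded away from one of the two lines $x=\pm\pi/2$, all of which I will rule out. The hypothesis that the solution lies in no strip narrower than $\Pi$ forces $\W_\infty$ to meet every neighborhood of each bounding line $x=\pm\pi/2$ (otherwise the solution would be confined to a narrower strip for all $t<0$), so $\W_\infty$ is not separated from either vertical edge. It remains to rule out that $\W_\infty$ is a proper half-strip $\{(x,y):|x|<\pi/2,\ y>y_0\}$ (or the reflected version). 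Here I would use the asymptotic Grim Reaper: a half-strip boundary is a horizontal line $y=y_0$, which is a stationary solution, whereas the actual curves $\Gamma_t$ are locally uniformly convex and, after recentering at $p(t)$, look like a fixed Grim Reaper — whose tips recede to $y=-\infty$ relative to any fixed height. Quantitatively, $y(p(t))-y(\text{lowest point of }\Gamma_t)\to-\infty$, which is incompatible with $\Gamma_t$ lying above a fixed line $y=y_0$ uniformly in $t$; hence no such $y_0$ exists and $\W_\infty=\Pi$.

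The main obstacle I anticipate is the last step — excluding the half-strip — since it requires turning the \emph{local} smooth convergence in Lemma~\ref{lem:asymptotic translator noncompact} into a statement about the \emph{global} vertical extent of $\Gamma_t$, i.e. that the curve genuinely sweeps downward without a finite floor. I would handle this by a barrier/comparison argument: place a large shrinking circle or a translated Grim Reaper below $\Gamma_{t_0}$ at some time $t_0$ and far in the past, and use the avoidance principle together with the known translating-speed lower bound $\kappa(0,t)\ge c>0$ for $t\ll 0$ to force the lowest point of $\Gamma_t$ to have $y$-coordinate tending to $-\infty$ as $t\to-\infty$; combined with $y(p(0))=0$ and monotonicity this pins down that the swept region extends to $y=-\infty$, and symmetrically (using that the solution is ancient and convex in the fixed strip) that it also extends upward, giving $\W_\infty=\Pi$.
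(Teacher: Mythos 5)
Your two ingredients are exactly the paper's: (i) the no-narrower-strip hypothesis supplies points of $\Omega:=\cup_{t<0}\Omega_t$ with $x$-coordinate arbitrarily close to $\pm\pi/2$, and (ii) the Harnack/asymptotic-Grim-Reaper speed bound $\kappa(0,t)\geq\kappa_\infty>0$ forces $y(p(t))\to-\infty$, so $\Omega$ has unbounded vertical extent. (Your displayed quantity ``$y(p(t))-y(\text{lowest point of }\Gamma_t)$'' is identically zero since $p(t)$ \emph{is} the lowest point; what you mean, and what is true, is simply $y(p(t))\to-\infty$. Also, the upward extent is not a symmetry statement: it holds because each $\Omega_t$ is the epigraph of a convex graph over the $x$-axis, so its recession cone contains $+e_2$.)

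The genuine gap is in how you assemble these facts. Your classification of the complement of $\Omega$ in $\Pi$ as ``a sub-strip, a half-strip, or bounded away from one of the lines $x=\pm\pi/2$'' is false, so ruling out the half-strip does not finish the proof. For instance, the epigraph $\{(x,y):|x|<\pi/2,\ y>-\log\cos x\}$ is a convex open subset of $\Pi$, contained in no narrower strip and with unbounded vertical extent in both directions, whose complement in $\Pi$ is none of your three types; nothing in your argument excludes $\Omega$ from being a set of this shape. Likewise, ``extends to $y=\pm\infty$ and is not contained in a narrower strip'' does not by itself give $\Omega=\Pi$. The missing step is the convex-hull computation, which is how the paper concludes: $\Omega$ is convex; for each near-edge point $p_i$ it contains the upward vertical ray from $p_i$ (epigraph structure) and, taking convex combinations of $p_i$ with $p(t)$ as $t\to-\infty$, the downward vertical ray as well, hence the entire vertical lines $\{x=\pm(\pi/2-\delta)\}$; the convex hull of two such lines is the strip $\{|x|\leq\pi/2-\delta\}$, and letting $\delta\to0$ gives $\Pi$. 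Your barrier/comparison digression for the tip displacement is unnecessary once you have the Harnack speed bound, which you already invoke.
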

\begin{proof}
Since the solution is locally uniformly convex, $\Omega_{t_2}\subset \Omega_{t_1}$ for all $t_1\leq t_2<0$. By hypothesis, given any $\delta\in(0,1)$, there exist points $p_1,p_2\in \Omega:=\cup_{t<0}\Omega_t$ such that
\[
x(p_1)=\frac{\pi}{2}-\delta\;\; \text{and}\;\; x(p_2)=-\frac{\pi}{2}+\delta\,.
\]
By convexity, the rays $\{(x(p_i),sy(p_i)): s>1\}$ are also contained in $\Omega$. Since $y(p(t))\to-\infty$ as $t\to-\infty$, we conclude that $\Omega$ contains the lines $\{(\frac{\pi}{2}-\delta,s):s\in\R\}$ and $\{(-\frac{\pi}{2}+\delta,s):s\in\R\}$ and hence, by convexity, also the strip $\Pi_\delta:=\{(x,y)\in\R^2:\vert x\vert\leq \frac{\pi}{2}-\delta\}$. So $\Pi=\cup_{\delta\in(0,1)}\Pi_\delta\subset \Omega\subset \Pi$, which implies the claim.
\end{proof}

By the Harnack inequality and Lemma \ref{lem:asymptotic translator noncompact}, the maximal displacement satisfies
\begin{equation}\label{Plength}
\ell(t):=\max_{p\in\Gamma_t}-y(p)=-y(p(t))\ge -r^{-1}t=-\kappa_\infty t\,.
\end{equation}
Let $A(t)$ and $B(t)$ be the two points on $\Gamma_t$ such that
\[
y(A(t))= y(B(t))=0
\]
arranged so that
\[
x(A(t))>x(B(t))\,.
\]
Let $\theta_{A}(t)$ and $\theta_{B}(t)$ be the corresponding turning angles, so that $\gamma(\theta_{A}(t), t)= A(t)$ and $\gamma(\theta_{B}(t), t)= B(t)$. Then the area $\area(t)$  enclosed by $\Gamma_t$ and the $x$-axis satisfies (cf. \cite{GaHa86,Mullins})
\[
-\area'(t)=\int_{B(t)}^{A(t)}\k\,ds=\int_{\theta_B(t)}^{\theta_A(t)}d\theta=\theta_A(t)-\theta_B(t)\leq \pi\,.
\]
%where $\Gamma_{A(t)B(t)}$ is the segment of $\Gamma_t$ joining $A(t)$ and $B(t)$. 
Since $\area(0)=0$, integrating from $-t$ to $0$ yields
\[
\area(t)\le -\pi t\,.
\]
Using the displacement estimate \eqref{Plength} and the fact that the solution approaches the boundary of the strip $\Pi$, we will prove that the enclosed area grows too quickly as $t\to-\infty$ if $r<1$ (see Figure \ref{fig:noncompact_area_estimate}).

\begin{figure}[h]
\begin{center}
\includegraphics[width=0.55\textwidth]{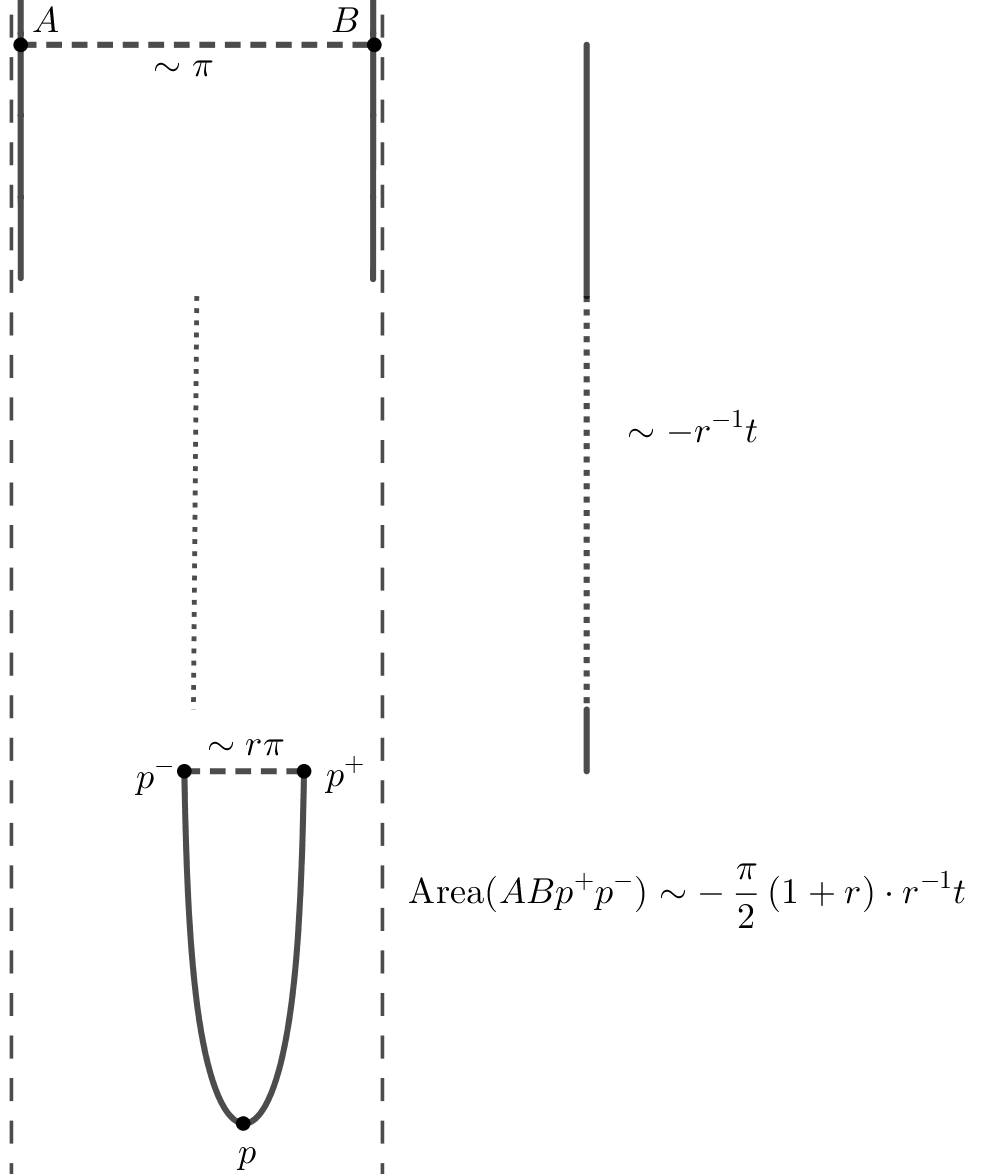}
\caption{\footnotesize The enclosed area grows too quickly if $r<1$.}\label{fig:noncompact_area_estimate}
\end{center}
\end{figure}

\begin{lemma}
The width of the asymptotic Grim Reaper is maximal: $r=1$.
\end{lemma}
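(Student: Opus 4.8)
The plan is to argue by contradiction: suppose the asymptotic Grim Reaper has width $r<1$, i.e. the bulk velocity $\kappa_\infty=r^{-1}>1$. The strategy is to play the upper bound $\area(t)\le -\pi t$ against a lower bound for $\area(t)$ that grows faster than linearly, or at least with a strictly larger linear coefficient, as $t\to-\infty$. The key geometric inputs are: (i) the displacement estimate $\ell(t)=-y(p(t))\ge -\kappa_\infty t$ from \eqref{Plength}, which says the "tip" region near turning angle $0$ descends at least as fast as the asymptotic translator; and (ii) Lemma~\ref{lem:sweeps out slab noncompact}, which says the solution exhausts the full strip $\Pi$ of width $\pi$, so for any $\delta>0$ the curve $\Gamma_t$ eventually contains points with $|x|$ as close to $\pi/2$ as we like.

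First I would set up the area lower bound. Fix a small $\delta\in(0,1)$ and a large (negative) time parameter; by Lemma~\ref{lem:sweeps out slab noncompact} there is a time $t_\delta<0$ such that $\Gamma_{t_\delta}$ (hence, by nesting $\Omega_{t}\subset\Omega_{t_\delta}$ for $t\le t_\delta$... careful: we want the region at time $t$ to be \emph{large}, and $\Omega_t$ shrinks as $t$ increases, so we use that $\Omega_t\supset\Omega_{t_\delta}$ for $t\le t_\delta$) contains points with $x$-coordinate within $\delta$ of $\pm\pi/2$ and, by convexity together with $y(p(t))\to-\infty$, contains the vertical segments over $x=\pm(\frac\pi2-\delta)$ reaching down to height roughly $-\ell(t)$. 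Combined with the tip descending to height $-\ell(t)\le \kappa_\infty t<0$, convexity forces $\Omega_t$ (intersected with the lower half-plane) to contain a region — a triangle or trapezoid with base the segment $[-(\frac\pi2-\delta),\frac\pi2-\delta]$ on the $x$-axis and apex near $(0,-\ell(t))$ — of area at least roughly $\tfrac12\cdot(\pi-2\delta)\cdot\ell(t)\ge \tfrac12(\pi-2\delta)(-\kappa_\infty t)$. A symmetric contribution comes from the part of $\Gamma_t$ above the $x$-axis, but even discarding that, I get $\area(t)\ge \tfrac12(\pi-2\delta)\kappa_\infty(-t)$ for all $t\le t_\delta$.

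Comparing with $\area(t)\le -\pi t$ gives $\tfrac12(\pi-2\delta)\kappa_\infty\le \pi$ for every $\delta>0$, hence $\kappa_\infty\le 2$, which is not yet a contradiction — so the crude triangle is not enough. The fix, and the main obstacle, is to sharpen the lower bound by using more of the Grim Reaper's shape near the tip and the full convex hull of the two descending sides: the convex region $\Omega_t$ contains the convex hull of the descending vertical rays over $x=\pm(\frac\pi2-\delta)$ \emph{and} of the downward-translating Grim Reaper profile near the tip (from Lemma~\ref{lem:asymptotic translator noncompact}, the rescaled solution near $p(s)$ looks like $r\mathrm{G}$, whose tip lies at distance $\sim r^{-1}|t|$ below a graph of horizontal width $\pi r<\pi$). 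The point is that for $|t|$ large the curve $\Gamma_t$ simultaneously (a) reaches down to height $-\kappa_\infty|t|(1+o(1))$ near $x=0$ and (b) reaches out to width $\pi-2\delta$ near $y=0$; convexity then forces the enclosed area to be at least $(1-\delta')\cdot\tfrac{\pi}{2}\cdot\kappa_\infty|t|$ plus lower-order terms — more carefully, one estimates $\area'(t)$ directly via $-\area'(t)=\theta_A(t)-\theta_B(t)$ and shows that, because the sides become vertical ($\theta_A\to\pi/2$, $\theta_B\to-\pi/2$) while the tip outpaces them, the defect $\pi-(\theta_A-\theta_B)$ is integrable, forcing $\area(t)=-\pi t + O(1)$; but the width-$r$ Grim Reaper at the tip already accounts for area growing like $\pi r \cdot \kappa_\infty |t|=\pi|t|$ only if $r=1$, whereas the extra width out to $\pm(\frac\pi2-\delta)$ beyond the Grim Reaper's own slab contributes a further definite linear amount when $r<1$. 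Making this last dichotomy quantitative — pinning down that the "overhang" region between the Grim Reaper's slab of width $\pi r$ and the ambient strip of width $\pi$ contributes linearly growing area with a coefficient that pushes $\area(t)$ strictly below $-\pi t$ — is the crux, and I expect it to require the asymptotics of Lemma~\ref{lem:asymptotic translator noncompact} together with a careful choice of the order of limits ($t\to-\infty$ first, then $\delta\to 0$). Once that strict inequality $\limsup_{t\to-\infty}\area(t)/(-t)>\pi$ is established under the assumption $r<1$, it contradicts $\area(t)\le-\pi t$, forcing $r=1$.
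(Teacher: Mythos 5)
Your framework is the paper's: compare the upper bound $\area(t)\le-\pi t$ with a lower bound built from convexity, the displacement estimate \eqref{Plength} and the fact that the solution exhausts $\Pi$. But the proposal stops exactly where the proof has to be closed, and you say so yourself (``making this last dichotomy quantitative \dots is the crux''). The missing step is a one-line computation with the right comparison region. Your triangle (apex at the tip, base the chord $AB$ of width $\pi-2\delta$ at height $0$) has zero width at the bottom, so its area is about $\tfrac12(\pi-2\delta)\,r^{-1}|t|$, with leading coefficient $\tfrac{\pi}{2r}$; this exceeds $\pi$ only when $r<\tfrac12$, which is why you only reach $\kappa_\infty\le 2$. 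The fix is not a finer analysis of $\area'(t)$ or an integrability statement for the turning-angle defect $\pi-(\theta_A-\theta_B)$; it is simply to replace the triangle by the trapezium you already describe in passing but never use.

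Concretely: by Lemma~\ref{lem:asymptotic translator noncompact}, near the tip the curve is modelled on a Grim Reaper of width $\pi r$, so for $t$ very negative there are two points $p^\pm(t)$ at equal height, within a fixed distance $C_\delta$ of the tip $p(t)$ (hence at depth at least $r^{-1}|t|-C_\delta$), with horizontal separation at least $\pi r-\delta$. The convex hull of $A(t)$, $B(t)$, $p^-(t)$, $p^+(t)$ is a trapezium of area at least $\tfrac12\left(\pi(1+r)-2\delta\right)\left(r^{-1}|t|-C_\delta\right)$, and the leading coefficient $\tfrac{\pi(1+r)}{2r}$ is strictly larger than $\pi$ precisely when $r<1$. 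Comparing with $\area(t)\le-\pi t$, letting $t\to-\infty$ and then $\delta\to 0$ gives $\pi(1-r)\le 0$, hence $r=1$ (the inequality $r\le 1$ is automatic since the limiting Grim Reaper lies in $\Pi$). So the gap is real but small: the nonzero bottom width $\pi r$ of the trapezium is exactly the ``extra definite linear amount'' you were hunting for, and none of the additional machinery you sketch is needed.
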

\begin{proof}
%By convexity, \eqref{Plength} and since $\{\Gamma_t\}_{t\in(-\infty, T)}$ lies in the strip $\{(x,y):|x|<\pi/2\}$ and in no smaller strip we have the following. 
By Lemma \ref{lem:sweeps out slab noncompact}, for any $\d\in(0,1)$ we can find $t_\d<0$ such that
\[
\pi\ge x(A(t))- x(B(t))\ge \pi-\d
\]
for every $t<t_\d$.
%and 
%\[
%\theta_{A}(t)-\theta_{B}(t)>\pi-\d\,.
%\]
By Lemma \ref{lem:asymptotic translator noncompact}, choosing $t_\delta$ smaller if necessary, we can also find two points $p^\pm(t)$ and a constant $C_\d$ with the following properties:
\[
\begin{split}
y(p^+(t))={}&y(p^-(t))\\
\pi r-\d<x(p^+(t))-{}&x(p^-(t))<\pi r\\
0<y(p^\pm(t))-{}&y(p(t))< C_\d\,.
\end{split}
\]
Since the enclosed area is bounded below by that of the trapezium with vertices $A$, $B$, $p^-$ and $p^+$, we find that
\[
\begin{split}
-\pi t\ge\area(t)\ge& %(w-\d)\left(-\frac{\pi}{w}t- C_\d\right)+\left(-\frac{\pi}{w}t- C_\d\right)\frac{\pi-w}{2}\\
%&=-\pi\left(1+\frac{\pi-w}{2w}-\frac{\d}{w}\right)t- C_\d\left(\frac{\pi-w}{2}+w-\d\right)\,.
\frac{1}{2}\left(\pi r-\d+\pi-\delta\right)\left(-r^{-1}t- C_\d\right)\,.
\end{split}
\]
That is,
\[
-(\pi(1-r)-2\delta)t\leq (\pi(1+r)-2\delta)rC_\delta
\]
for all $t<t_\delta$. Taking $t\to-\infty$, we conclude that $(1-r)\pi\leq 2\delta$ for any $\delta>0$. Taking $\delta\to 0$ then yields the claim.
\end{proof}

The Alexandrov reflection principle \cite{Chow97,ChGu01} can now be employed to prove that the solution is reflection symmetric about the $y$-axis (cf. \cite{BrLo}).
\begin{lemma}\label{lem:refl} 
Let $\{\Gamma_t\}_{t\in(-\infty, 0)}$ be a convex ancient solution which is not compact and is contained in the strip $\Pi:=\{(x,y)
:|x|<\pi/2\}$ and in no smaller strip. Then $\Gamma_t$ is reflection symmetric about the $y$-axis for all $t<0$.
\end{lemma}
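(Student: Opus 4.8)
The plan is to run the Alexandrov moving-plane argument with vertical reflection lines, using the noncompactness and the fact that the solution sweeps out the full strip $\Pi$ (Lemma~\ref{lem:sweeps out slab noncompact}) together with the asymptotics of Lemma~\ref{lem:asymptotic translator noncompact} to control the behaviour at the ends of the curve. For $\mu\in\R$ let $R_\mu$ denote reflection across the vertical line $\{x=\mu\}$ and let $\Gamma_t^{\mu,-}$ be the part of $\Gamma_t$ lying in the halfplane $\{x\le\mu\}$. I would first show that for every $\mu\ge 0$ and every $t<0$, the reflected arc $R_\mu(\Gamma_t^{\mu,-})$ lies (weakly) on the closed side $\{x\ge\mu\}$ of $\Gamma_t$, i.e. inside the region bounded by $\Gamma_t^{\mu,+}:=\Gamma_t\cap\{x\ge\mu\}$ and its vertical chord — in other words, $\Gamma_t$ ``reflects correctly'' across every vertical line to the right of the $y$-axis. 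By symmetry of the setup, the analogous statement holds for $\mu\le 0$ reflecting the right part onto the left. Applying both at $\mu=0$ gives that $R_0(\Gamma_t^{0,-})$ and $\Gamma_t^{0,+}$ bound each other, hence coincide, which is the asserted reflection symmetry about the $y$-axis.

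To establish the monotone reflection statement I would argue by the maximum principle / Alexandrov reflection for curve shortening flow (\cite{Chow97,ChGu01}). Fix $\mu\ge 0$. The key point is to start the reflection at $\mu$ very large: since the solution lies in $\Pi$, for $\mu\ge\pi/2$ the halfplane $\{x\ge\mu\}$ meets no $\Gamma_t$ and the reflection condition holds vacuously. I then decrease $\mu$ and let
\[
\mu_*:=\inf\{\mu\ge 0:\ R_{\mu'}(\Gamma_t^{\mu',-})\ \text{lies on the far side of}\ \Gamma_t\ \text{for all}\ \mu'\in[\mu,\pi/2]\ \text{and all}\ t<0\}.
\]
Suppose for contradiction that $\mu_*>0$. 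Then at $\mu_*$ the reflected family touches $\Gamma_t$ at some first time, either at an interior point or at a point on the axis $\{x=\mu_*\}$ where the tangency is to first order; in either case the strong maximum principle (applied to the difference of the two local graphs, which both solve curve shortening flow) forces $R_{\mu_*}(\Gamma_t^{\mu_*,-})$ and $\Gamma_t^{\mu_*,+}$ to agree on an open set and hence, by analyticity/unique continuation for the flow, to agree identically — so $\Gamma_t$ would be symmetric about $\{x=\mu_*\}$ for all $t$. But a curve symmetric about a vertical line other than the one it is ``centred'' on, combined with convexity and lying in $\Pi$ and in no smaller strip, is impossible: the reflected solution $\{R_{\mu_*}\Gamma_t\}$ would be another convex ancient solution in $\Pi$ agreeing with $\{\Gamma_t\}$, which by the backward uniqueness and the location of $p(t)$ (with $x(p(t))$ the point of turning angle $0$) cannot happen unless $\mu_* = 0$. (Equivalently: symmetry about $\{x=\mu_*\}$ with $\mu_*>0$ contradicts Lemma~\ref{lem:sweeps out slab noncompact}, which forces the curve to approach \emph{both} lines $\{x=\pm\pi/2\}$.)

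The main obstacle — and the place where noncompactness must really be used — is ensuring that in the limiting process no tangency ``escapes to infinity'' along the noncompact ends of $\Gamma_t$, i.e.\ that the reflected arc cannot cross $\Gamma_t$ first at the far ends rather than at an interior point. This is exactly where Lemma~\ref{lem:asymptotic translator noncompact} and Lemma~\ref{lem:sweeps out slab noncompact} enter: near its ends $\Gamma_t$ is, after translation, close to a vertical Grim Reaper asymptote $x=\pm\pi/2$, so for $\mu>0$ the reflected end $R_\mu(\Gamma_t^{\mu,-})$ near $x = \pi/2 - \text{(small)}$ is strictly interior to $\Omega_t$ at infinity — the end of the curve near $x=\pi/2$ is ``further right'' than the reflection of the end near $x=-\pi/2$. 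Thus contact, if it occurs, must occur at a compact part of spacetime, where the strong maximum principle applies cleanly. Once this is in place, the argument above closes and reflection symmetry about the $y$-axis follows. I would also remark that the same scheme, using \emph{horizontal} reflection lines, shows $\Gamma_t$ is a vertical translate of a fixed curve plus possibly further symmetry — but that is not needed for the present statement.
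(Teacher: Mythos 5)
Your overall strategy (Alexandrov reflection across vertical lines, anchored by the Grim Reaper asymptotics and Lemma~\ref{lem:sweeps out slab noncompact}) is the same as the paper's, but two steps go wrong, and the first is exactly at the point you yourself identify as the crux. You reflect the \emph{left} portion $\Gamma_t^{\mu,-}$ across $\{x=\mu\}$ with $\mu>0$ and claim it lands inside $\overline{\Omega_t\cap\{x\ge\mu\}}$; this containment is false at the noncompact ends. Writing $c^\pm(t)$ for the vertical asymptotes of the two upward ends of $\Gamma_t$ (so $c^\pm(t)\to\pm\pi/2$ as $t\to-\infty$ by Lemma~\ref{lem:sweeps out slab noncompact}), the reflected left end is asymptotic to $x=2\mu-c^-(t)\to 2\mu+\pi/2>\pi/2$; that is, it leaves the strip $\Pi$ altogether, let alone $\Omega_t$ --- the opposite of your assertion that it is ``strictly interior to $\Omega_t$ at infinity''. (For the same reason your starting point is not vacuous: for $\mu\ge\pi/2$ you are asking all of $R_\mu\Gamma_t$ to lie in the region bounded by $\Gamma_t^{\mu,+}=\emptyset$.) The correct formulation, which is the one the paper uses following Chow and Chow--Gulliver, is the non-crossing condition $(R_\mu\cdot\Gamma_t)\cap(\Gamma_t\cap\{x<\mu\})=\emptyset$, i.e.\ the curve and its reflection meet only on the line $\{x=\mu\}$; under this formulation the reflected end wandering outside the strip is harmless.

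Second, your moving-plane-in-space scheme, with $\mu_*$ defined as an infimum over $\mu$ of a condition quantified over \emph{all} $t<0$, does not by itself produce a touching point at a finite spacetime point: as $\mu\searrow\mu_*$ the putative crossings can occur at times $t\to-\infty$ (where the curve is only asymptotically symmetric), and then there is nothing to which to apply the strong maximum principle. You address escape to spatial infinity but not escape in time. The paper's proof is structured precisely to avoid this: for each \emph{fixed} $\alpha>0$ one first shows, from the convergence of the tip to the (width-$\pi$, hence asymptotically centred) Grim Reaper together with $c^+(t)+c^-(t)\to0$, that the strict non-crossing condition holds for all $t<t_\alpha$; the parabolic Alexandrov reflection principle then propagates it forward to all $t<0$, and only afterwards does one send $\alpha\searrow0$ (and repeat from the left) to conclude symmetry, with no continuity argument in $\alpha$ at all. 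Your final observation --- that symmetry about $\{x=\mu_*\}$ with $\mu_*>0$ would contradict Lemma~\ref{lem:sweeps out slab noncompact} --- is correct, but the two gaps above must be repaired before the argument closes.
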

\begin{proof}
Set
\[
H_\a:=\{(x,y)\in \R^2:x<\a\}% \text{ if }\a>0%\;\;\text{and}\;\;\Pi_\a:=\{(x,y)\in \R^2:x>\a\} \text{ if }\a<0
\]
and denote by $R_\a$ the reflection about $\partial H_\a$. It is a consequence of the convexity of $\{\Gamma_t\}_{t\in(-\infty,0)}$ and the convergence of its `tip' to the Grim Reaper that, given any $\a\in (0, \pi/2)$, there exists a time $t_\a$ such that
\[
(R_\a\cdot\Gamma_t)\cap (\Gamma_t\cap H_\a)=\emptyset
\]
for all $t<t_\a$ (cf. \cite[Claim 6.2.1]{BLT1}). By the Alexandrov reflection principle \cite{Chow97,ChGu01}, this is true for all $t<0$ . Taking $\a\searrow 0$ proves the lemma.
\end{proof}

We can now prove that the solution is the Grim Reaper.
\begin{theorem} \label{thm:noncompact case}
Let $\{\Gamma_t\}_{t\in(-\infty, 0)}$ be a convex ancient solution which is contained in the strip $\Pi:=\{(x,y)
:|x|<\pi/2\}$ and in no smaller strip and is not compact. Then $\{\Gamma_t\}_{t\in(-\infty,0)}$ a Grim Reaper. %Then $\{\Gamma_t\}_{t\in(-\infty,0)}$ is the restriction to $(-\infty,0)$ of a vertical translate of the Grim Reaper or its vertical reflection.
\end{theorem}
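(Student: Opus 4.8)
The plan is to upgrade the asymptotic information we have accumulated into a pointwise rigidity statement via a monotonicity/convexity argument for the arrival time, exploiting the reflection symmetry from Lemma~\ref{lem:refl}. By the previous lemma the asymptotic Grim Reaper has width $r=1$, so the solution is asymptotic (after translating by $p(s)$ and letting $s\to-\infty$) to the \emph{maximal-width} Grim Reaper $\{\mathrm{G}_{t+c}\}$, which is exactly the Grim Reaper inscribed in $\Pi$. Combined with the fact (Lemma~\ref{lem:sweeps out slab noncompact}) that the solution sweeps out all of $\Pi$ and with the displacement estimate \eqref{Plength}, this pins down both the spatial extent and the asymptotic speed of the tip: $\ell(t)=-y(p(t))$ satisfies $\ell(t)\ge -t$ and, by the area bound $\area(t)\le-\pi t$ together with the trapezium lower bound used in the previous lemma (now with $r=1$), in fact $\ell(t)/(-t)\to 1$.

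First I would set up the comparison. Let $\mathrm{G}$ denote the Grim Reaper with $\mathrm{G}_t=\{(x,y):y=t-\log\cos x,\ |x|<\pi/2\}$ (inscribed in $\Pi$, moving upward with unit speed), and for a constant $a\in\R$ write $\mathrm{G}^a_t:=\mathrm{G}_t+a e_2$ for the vertical translates. Since every translate $\mathrm{G}^a$ is a Grim Reaper in $\Pi$ and the $\mathrm{G}^a$ foliate $\Pi\times\R$, the function
\[
a(p,t):=\text{the unique } a \text{ such that } p\in\mathrm{G}^a_t
\]
is well defined and smooth on $\Pi\times(-\infty,0)$, and I would track
\[
\underline a(t):=\min_{p\in\Gamma_t} a(p,t),\qquad \overline a(t):=\max_{p\in\Gamma_t} a(p,t).
\]
By the avoidance principle applied to $\{\Gamma_t\}$ and each $\{\mathrm{G}^a_t\}$ (both ancient, both in $\Pi$, and disjoint for $|a|$ large by the asymptotics of Lemma~\ref{lem:asymptotic translator noncompact}), $\underline a$ is nondecreasing and $\overline a$ is nonincreasing on $(-\infty,0)$; hence both limits $\underline a(0^-)$ and $\overline a(-\infty)$ exist in $[-\infty,\infty]$, and $\underline a\le\overline a$ throughout. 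The reflection symmetry from Lemma~\ref{lem:refl} guarantees the extrema are attained symmetrically and, crucially, lets us localize near the tip $p(t)$, which lies on the $y$-axis.

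Next I would show $\underline a\equiv\overline a$, i.e.\ that $\Gamma_t$ is a single leaf of the foliation. The key is that $r=1$ forces $\overline a(-\infty)-\underline a(-\infty)=0$: the displacement asymptotics $\ell(t)=-t+o(t)$ say that the tip of $\Gamma_t$ and the tip of $\mathrm{G}_t$ drift apart only sublinearly, so $a(p(t),t)$ stays bounded as $t\to-\infty$; meanwhile if $\overline a-\underline a\ge\epsilon>0$ for all $t$, then by monotonicity $\Gamma_t$ would have to separate from some fixed leaf $\mathrm{G}^a$ with, say, $\underline a(0^-)\le a\le\overline a(-\infty)$, forcing $\Gamma_t$ to exit $\Pi$ or to contradict the sweeping-out property of Lemma~\ref{lem:sweeps out slab noncompact} at one end or the other. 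Once $\underline a(t_0)=\overline a(t_0)$ at a single time $t_0$, the strong maximum principle (or simply the uniqueness of smooth CSF through a given curve) gives $\Gamma_t=\mathrm{G}^{a}_t$ for all $t$, and Theorem~\ref{thm:noncompact case} follows after undoing the normalizations.

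The main obstacle I expect is the separation step: ruling out the possibility that $\Gamma_t$ "drifts along" the foliation so that $\underline a\to-\infty$ while $\overline a$ stays finite (or vice versa), which a priori is consistent with $\Gamma_t\subset\Pi$ and with monotonicity. This is exactly where the two quantitative inputs must be combined rather than used in isolation: the lower area bound coming from the trapezium with vertices $A,B,p^\pm$ (which with $r=1$ gives $\area(t)\ge -\pi t - O(1)$) together with the upper bound $\area(t)\le -\pi t$ squeezes the enclosed area to leading order, and this forces $\Gamma_t$ to hug a \emph{single} Grim Reaper profile to leading order; any nontrivial spread $\overline a-\underline a$ would produce a definite area deficit or surplus relative to a genuine Grim Reaper, contradicting the squeeze. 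I would carry this out by estimating $\area(t)$ from above by the area between $\mathrm{G}^{\underline a(t)}_t$ and the $x$-axis (since $\Omega_t$ lies below that leaf near the top) and from below by the trapezium, and checking that the gap between these two is $o(-t)$ only if $\overline a-\underline a\to 0$.
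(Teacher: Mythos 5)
Your overall strategy---sandwiching $\Gamma_t$ between vertical translates $\mathrm{G}^a_t$ of the unit Grim Reaper and pinching the extreme leaves together---is in the same spirit as the paper's proof, which compares $\Gamma_t$ with $\mathrm{G}_t$ and with the time-shifted flows $\Gamma_{t+\tau}$. But the two quantitative engines you propose do not deliver the pinching, and this is where the real content of the theorem lies. First, you assert that $a(p(t),t)=-(\ell(t)+t)$ ``stays bounded as $t\to-\infty$'' because $\ell(t)=-t+o(t)$. That implication is false: $\ell(t)+t=o(|t|)$ is perfectly consistent with $\ell(t)+t\to+\infty$. Boundedness (in fact vanishing) of $\ell(t)+t$ is precisely the crux of the proof, and the paper obtains it by a separate mechanism: the Harnack inequality gives that $\ell(t)+t$ is monotone, so the limit $C$ exists, and $C>0$ is then ruled out by a maximum-principle comparison against \emph{reflected} Grim Reapers $(R_\alpha\cdot\mathrm{G}_t)\cap\{x<0\}$ together with the normalization that both $\Gamma_t$ and $\mathrm{G}_t$ reach the origin at $t=0$; a time-shifting trick ($\Gamma_t^\tau:=\Gamma_{t+\tau}$, for which $C_\tau>0$) then gives the opposite inclusion.

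Second, your proposed ``area deficit'' mechanism for forcing $\overline a-\underline a\to0$ cannot work as stated. The region between two leaves $\mathrm{G}^{a}_t$ and $\mathrm{G}^{a'}_t$ inside $\Pi$ has area exactly $\pi|a-a'|$, a quantity that is $O(1)$ and independent of $t$, whereas the squeeze you have available, $\area(t)\le-\pi t$ from above and the trapezium bound from below, only pins down $\area(t)$ to precision $o(|t|)$ (for fixed $\delta$ the trapezium bound has an error $\delta|t|+C_\delta$, and $\delta\to0$ only after $t\to-\infty$). An $O(1)$ spread in $a$ is therefore invisible to the area comparison; ``a definite area deficit or surplus'' never materializes. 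Finally, two smaller but genuine issues: (i) the avoidance principle you invoke is between two noncompact curves both asymptotic to the lines $x=\pm\pi/2$, so a first touching could occur at spatial infinity; the paper's reflected-Grim-Reaper construction exists precisely to reduce to a compact interior touching point (the reflected leaf lives in a strictly smaller substrip). (ii) You never verify that $\overline a(t)$ and $\underline a(t)$ are finite---for a general convex graph in $\Pi$ the quantity $y-t+\log\cos x$ can be unbounded along $\Gamma_t$---so the monotone quantities you track may be identically $\pm\infty$. To repair the argument you would essentially have to import the paper's barrier comparison, at which point the foliation bookkeeping becomes redundant.
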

\begin{proof}%[Proof of Theorem \ref{thm:main}]
\begin{comment}
Assume that $\{\Gamma^{i}_t\}_{t\in(-\infty, 0)}$, $i=1,2$ is a solution of curve shortening flow, which is not a family of shrinking circles. By Lemma \ref{lem:slab}, after rotating and scaling,
\[
\cup_{t\in (-\infty, 0)}\Gamma_t=\left\{(x,y)\in \R^2:|x|<\frac\pi2\right\}.
\]
We parametrize $\{\Gamma_t\}_{t\in(-\infty,0)}$ by turning angle
\[
\Gamma_t=\gamma(\theta,t)\,,\,\,\theta\in[0,2\pi)
\]
so that $\theta$ is the angle the tangent vector (given a counter-clockwise orientation) makes with the positive $x$-axis.
By Lemma \ref{lem:w}, taking any sequence $t_i\to-\infty$ we have that the sequences
\[
\gamma^i(\theta, t):=\gamma(\theta, t+t_i)-\gamma(0,t_i)\,\,\text{ and }\,\,\gamma^i(\theta, t):=\gamma(\theta, t+t_i)-\gamma(\pi,t_i)
\]
converge to (standard, unscaled) Grim Reapers.
\end{comment}
%Denote by $\{\mathrm{G}_t\}_{t\in(-\infty,0)}$ the Grim Reaper with bulk velocity $e_2$. % and let $\gamma_{\mathrm{G}}:(-\frac{\pi}{2},\frac{\pi}{2})\times (-\infty,0)\to\R^2$ be its counterclockwise turning angle parametrization. %Denote by
%\[
%\ell_A(t)=-\gamma_A(0,t)\cdot e_2
%\]
%the maximal vertical displacement of the Angenent oval and by
%\[
%\ell_A(t)=-\gamma_G(0,t)\cdot e_2=-t
%\]
%that of the Grim Reaper. 
Since, by the Harnack inequality, the curvature of $\{\Gamma_t\}_{t\in(-\infty,0)}$ is non-decreasing in $t$ with respect to the turning angle parametrization, the maximal vertical displacement $\ell$ of $\{\Gamma_t\}_{t\in(-\infty,0)}$ satisfies
\[
\frac{d}{dt}\left(\ell(t)+t\right)\le 0\,.
\]
So the limit
\[
C:=\lim_{t\to-\infty}(\ell+t)
\]
exists in $(0,\infty]$.
\begin{claim}
The asymptotic displacement is equal to that of the Grim Reaper:
\[
C=0\,.
\]
\end{claim}
\begin{proof}[Proof of the claim]
Suppose, contrary to the claim, that $C>0$ (the case $C<0$ is ruled out similarly) then we can find $t_0$ such that 
\begin{equation}\label{eq:ellt0}
\ell(t)>-t \,\,\text{ for all }t<t_0\,.
\end{equation}
%where now $\ell^\pm$ are now defined on the possibly translated solution.

Define the halfspaces $H_\a$ and the reflection $R_\a$ about $\partial H_\a$ as in Lemma \ref{lem:refl}. Given any $\a\in(0,\pi/2)$, set
\[
\wt{\mathrm{G}}_t=(R_\a\cdot \mathrm{G}_t)\cap \{(x,y)\in \R^2:x<0\}\,,% \text{ for }\a>0\,,
\]
where $\{\mathrm{G}_t\}_{t\in(-\infty,\infty)}$ is the Grim Reaper, and
%\[
%\wt G_t=(R_\a\cdot G_t)\cap \{(x,y)\in \R^2:x>0\} \text{ for }\a<0
%\]
% and also
\[
\wt \Gamma_t=\Gamma\cap \{(x,y)\in \R^2:x<0\}\,.% \text{ for }\a>0\,.
\]
%and
%\[
%\wt \Gamma_t=\Gamma\cap \{(x,y)\in \R^2:x>0\} \text{ for }\a<0\,.
%\]
Then, by \eqref{eq:ellt0}, $\partial\wt{\mathrm{G}}_t\cap\partial\wt\Gamma_t=\emptyset$ for all $t<t_0$. Moreover, by convexity and the convergence of the tip to the Grim Reaper, there exists $t_\a<t_0$ depending on $\a$ such that $\wt{\mathrm{G}}_t\cap\wt\Gamma_t=\emptyset$ for all $t<t_\a$. It then follows by the strong maximum principle that $\wt{\mathrm{G}}_t\cap\wt\Gamma_t=\emptyset$ for all $t<t_0$. Letting $\a\searrow 0$, we find that  $\Gamma_t$ lies below $\mathrm{G}_t$ for all $t<t_0$, contradicting the fact that both curves reach the origin at time $t=0$.
\end{proof}
%We can thus assume that $C^++C^-=2C_A$. 
Now consider, for any $\t>0$, the solution $\{\Gamma_t^\tau\}_{t\in(-\infty,0)}$ defined by $\Gamma_t^\t=\Gamma_{t+\tau}$. Since
\[
C_\t:=\lim_{t\to-\infty}(\ell(t+\t)+t)>0\,,
\]
we may argue as above to conclude that $\Gamma_t^\t$ lies above $\mathrm{G}_t$ for all $t<0$. Taking $\tau\to 0$, we find that $\Gamma_t$ lies above $\mathrm{G}_t$ for all $t<0$. Since the two curves reach the origin at time zero, they intersect for all $t<0$ by the avoidance principle. The strong maximum principle then implies that the two coincide for all $t$.
\end{proof}

\section{The compact case}\label{sec:compact}

Combined with the theorems of Hamilton, Daskalopoulos and \v Se\v sum \cite{DHS} and Wang \cite[Corollary 2.1]{Wa11}, Theorem \ref{thm:noncompact case} already implies Theorem \ref{thm:main}. In this section, we present a different proof, along the same lines as the noncompact case, that the Angenent oval is the only compact example which lies in a strip. In the following section, we prove that the shrinking circle is the only example which does not lies in a strip, partly following \cite{Wa11}.

So assume that $\{\Gamma_t\}_{t\in (-\infty, 0)}\subset \R^2$ is a \emph{compact}, convex ancient solution contained the strip region $\Pi:=\{(x,y):|x|<\pi/2\}$ and in no smaller one. By the Gage--Hamilton theorem \cite{GaHa86}, we may assume that it  shrinks to a single point $p\in \Pi$ at time zero. After a vertical translation, we can arrange that $y(p)=0$. %Recall that  $\{\Gamma_t\}_{t\in (-\infty, 0)}\subset \R^2$ is a solution of curve shortening flow if 
%\[
%\frac{\partial X}{\partial t}(\theta,t)=-k(\theta,t)\nu(\theta, t),
%\]
%where $X$ is the position vector, $k$ is the curvature  and $\nu$ is the outer pointing unit normal. We begin by proving certain properties of this solution.

Denote by $\theta(p,t)$ the angle the tangent vector to $\Gamma_t$ at $p\in \Gamma_t$ with respect to a counter-clockwise parametrization makes with the positive $x$-axis and let $\gamma:S^1\times (-\infty,0)\to\R^2$ be the corresponding family of turning angle parametrizations for $\{\Gamma_t\}_{t\in(-\infty,0)}$. Set
\[
p^-(t):=\gamma(0,t)\;\;\text{and}\;\;p^+(t):=\gamma(\pi,t)\,.
\]
Since $\Gamma_t$ is convex, $p^+(t)$ and $p^-(t)$ are its maximal vertical displacements. I.e.
\[
\ell_\pm(t):=\max_{p\in\Gamma_t}\pm y(p)=y(p^{\pm}(t))\,.
\]

\begin{lemma}\label{lem:asymptotic translators compact}
The pair of translated families $\{\Gamma^\pm_{s,t}\}_{t\in(-\infty,-s)}$ %and $\{\Gamma^-_{s,t}\}_{t\in(-\infty,-s)}$ 
defined by
%\[
%\gamma^s_+(\theta,t):=\gamma(\theta, t+s)-\gamma(0,s)
%\]
%and
%\[
%\gamma^s_-(\theta,t):=\gamma(\theta, t+s)-\gamma(\pi,s)
%\]
$\Gamma^\pm_{s,t}:=\Gamma_{t+s}-p^\pm(s)$ %\;\;\text{and}\;\; \Gamma^-_{s,t}:=\Gamma_{t+s}-p^-(s)
converge locally uniformly in the smooth topology as $s\to-\infty$ to the Grim Reapers $\{-r_+\mathrm{G}_{r_+^{-2}t}\}_{t\in(-\infty,\infty)}$ and $\{r_-\mathrm{G}_{r_-^{-2}t}\}_{t\in(-\infty,\infty)}$ respectively, where % $\{\mathrm{G}_t\}_{t\in(-\infty,0)}$ is the (standard) Grim Reaper and
$r_-:=\lim_{s\to-\infty}\kappa^{-1}(0,s)$,  $r_+:=\lim_{s\to-\infty}\kappa^{-1}(\pi,s)$ and $\kappa$ is the curvature of $\gamma$.

Moreover, $\{\Gamma_t\}_{t\in(-\infty,0)}$ sweeps out all of $\Pi$.
\end{lemma}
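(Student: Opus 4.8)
The plan is to mirror the structure of Lemma \ref{lem:asymptotic translator noncompact} and Lemma \ref{lem:sweeps out slab noncompact} from the noncompact case, now run separately at the top tip $p^+(t)$ and the bottom tip $p^-(t)$. First, I would observe that locally uniform convexity holds by the strong maximum principle applied to the curvature evolution equation (a compact convex ancient solution that is not a stationary line — which it cannot be, being compact — must be locally uniformly convex). The differential Harnack inequality of \cite{Ham95} then gives that $\kappa$ is nondecreasing in $t$ along fixed turning angles, so the limits $r_\pm := \lim_{s\to-\infty}\kappa^{-1}(0,s)$ and $\lim_{s\to-\infty}\kappa^{-1}(\pi,s)$ exist in $[0,\infty]$. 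Each translated family $\Gamma^\pm_{s,t} = \Gamma_{t+s} - p^\pm(s)$ passes through the origin at time zero and lies in a strip of width $\pi$; by standard interior estimates and bootstrapping, subsequences converge locally smoothly to weakly convex eternal limit flows lying in a strip of width $\le \pi$. At the limit the normal at the origin at time zero is vertical (parallel to the strip), so the strong maximum principle upgrades the limit to locally uniformly convex, forcing $r_\pm$ positive and finite. Since the limit curvature is constant in $t$ with respect to turning angle, the rigidity case of the Harnack inequality forces the limit to be a translator; as the only translators are Grim Reapers and lines \cite{Mullins}, and lines are excluded, the limits are Grim Reapers. Tracking the orientation: near $p^-(t)=\gamma(0,t)$ the curve bounds its convex body from below and moves upward, giving $\{r_-\mathrm{G}_{r_-^{-2}t}\}$; near $p^+(t)=\gamma(\pi,t)$ it bounds from above and the reflected Grim Reaper $\{-r_+\mathrm{G}_{r_+^{-2}t}\}$ appears. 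Independence of the subsequence gives the stated convergence.

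For the ``sweeps out all of $\Pi$'' assertion, I would argue exactly as in Lemma \ref{lem:sweeps out slab noncompact}: write $\Omega := \cup_{t<0}\Omega_t$, which is convex (nested timeslices by locally uniform convexity) and open. Since $\{\Gamma_t\}$ lies in $\Pi$ and in no narrower strip, for every $\delta\in(0,1)$ there are points of $\Omega$ with $x$-coordinate $\pm(\pi/2-\delta)$. The Harnack displacement estimate (as in \eqref{Plength}) gives $\ell_\pm(t)\ge r_\pm^{-1}(-t)\to+\infty$ as $t\to-\infty$, so $\Omega$ is unbounded in both the $+y$ and $-y$ directions. Convexity then forces $\Omega$ to contain the vertical lines $\{x = \pm(\pi/2-\delta)\}$ and hence the closed sub-strip $\Pi_\delta := \{|x|\le \pi/2-\delta\}$; taking the union over $\delta$ gives $\Pi \subset \Omega \subset \Pi$, so $\Omega = \Pi$.

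The main obstacle is the bookkeeping around orientations and the sign conventions for the two Grim Reaper limits, together with the subtle point — also present in the noncompact case — that one must rule out a stationary line occurring as the limit flow at each tip. This is handled by noting that the normal to the limit at the origin at time zero is forced to be parallel to the strip (it is the limit of the normals at $p^\pm(s)$, which point vertically), so the limit cannot be a horizontal line, and then the strong maximum principle propagates strict convexity; once the limit is known to be an honest Grim Reaper one also gets for free that $r_\pm < \infty$ (a degenerate/infinite-width limit is incompatible with lying in a width-$\pi$ strip and being a nontrivial translator). Everything else — the interior estimates, bootstrapping, and the area/displacement comparisons in the final ``sweeping'' step — is routine and parallels the arguments already given.
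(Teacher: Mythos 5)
Your proposal is correct and follows essentially the same route as the paper, which proves this lemma simply by noting that the arguments of Lemmas \ref{lem:asymptotic translator noncompact} and \ref{lem:sweeps out slab noncompact} apply verbatim at each of the two tips; your write-up fills in exactly those details (Harnack monotonicity for existence of $r_\pm$, subsequential limits via curvature bounds and bootstrapping, exclusion of the stationary line via the vertical normal at the tip, Harnack rigidity plus the classification of translators, and the recession-cone argument for sweeping out $\Pi$). The orientation bookkeeping for the two Grim Reaper limits is also handled correctly.
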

\begin{proof}
The proof is similar to those of Lemmas \ref{lem:asymptotic translator noncompact} and \ref{lem:sweeps out slab noncompact}.
\end{proof}

%\vspace{1cm}
%It will be convenient to work with the widths $w_\pm:=\pi r_{\pm}\le\pi$ of the asymptotic Grim Reapers (rather than $r_\pm$). 
%So the turning angle parametrizations $\gamma_\pm^s$ for $\Gamma^{\pm}_{s,t}$ satisfy
%\[
%\gamma_\pm^s(\theta,t)\stackrel{s\to\infty}{\longrightarrow}\pm\left(\gamma_{r_\pm}(\theta)+r_\pm^{-1}te_2\right)\,,
%\]
%where, for $r>0$,
%\[
%\gamma_r(\theta):=r(\theta,-\log\cos\theta)
%\]
%is the Grim Reaper curve. 
By the Harnack inequality, the maximal vertical displacements satisfy
\begin{equation}\label{eq:ell}
\ell_\pm(t)\ge -t\k_\pm= -r_\pm^{-1}t\,.
\end{equation}
%
%\begin{figure}
%\begin{center}
%\includegraphics[width=0.25\textwidth]{../CSF_compact_tips}
%\caption{\footnotesize The approximate Grim Reapers at the `tips'.}\label{fig:tips}
%\end{center}
%\end{figure}
%
The area $\area(t)$ enclosed by $\Gamma_t$ satisfies \cite{GaHa86,Mullins}
\begin{equation}\label{eq:area}
\area(t)=-2\pi t\,.
\end{equation}
Using the displacement estimate \eqref{eq:ell} and Lemma \ref{lem:asymptotic translators compact}, we will prove that the enclosed area grows too quickly as $t\to-\infty$ unless $r_\pm=1$.

%\begin{figure}[h]
%\begin{center}
%\includegraphics[width=\textwidth]{../Tips}% picture filename
%\caption{The approximate Grim Reapers at the `tips'.}\label{fig:tips}
%\end{center}
%\end{figure}

%We claim that the two limit Grim Reapers are actually the standard one and its reflection about 0.

%\begin{figure}[h]
%\includegraphics[width=0.65\textwidth]{../CSF_compact_area_estimate}
%\caption{\footnotesize The enclosed area grows too quickly if $r_{+}<1$ or $r_-<1$.}\label{fig:quadrilateral}
%\end{figure} 

\begin{lemma}\label{lem:w}
The widths of the asymptotic Grim Reapers are maximal: $r_\pm=1$.
\end{lemma}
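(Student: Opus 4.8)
The plan is to mimic the area argument from Lemma~3.3 (the noncompact case), but now using \emph{both} asymptotic Grim Reapers simultaneously and the exact area formula \eqref{eq:area}. First I would fix $\delta\in(0,1)$. Since $\{\Gamma_t\}$ sweeps out all of $\Pi$ (Lemma~\ref{lem:asymptotic translators compact}), there is a time $t_\delta<0$ so that the two points $A(t),B(t)\in\Gamma_t$ on the $x$-axis satisfy $\pi\ge x(A(t))-x(B(t))\ge\pi-\delta$ for all $t<t_\delta$. Next, using the smooth convergence of $\Gamma_{t+s}-p^-(s)$ to the Grim Reaper $\{r_-\mathrm{G}_{r_-^{-2}t}\}$ and of $\Gamma_{t+s}-p^+(s)$ to $\{-r_+\mathrm{G}_{r_+^{-2}t}\}$, I would (shrinking $t_\delta$ if necessary) produce, for each $t<t_\delta$, horizontal chords near the two tips: points $q^\pm_1(t),q^\pm_2(t)\in\Gamma_t$ with $y(q^-_1)=y(q^-_2)$, $\pi r_- -\delta<x(q^-_1)-x(q^-_2)<\pi r_-$ and $0<y(q^-_i)-y(p^-(t))<C_\delta$, and symmetrically near the top with $r_-$ replaced by $r_+$ and the chord lying a bounded distance $C_\delta$ below $p^+(t)$.

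The region enclosed by $\Gamma_t$ then contains the hexagon with vertices $q^-_2,q^-_1,A,B$ (bottom half, straddling the $x$-axis) together with the hexagon $B,A,q^+_1,q^+_2$ (top half) — more precisely, by convexity, $\Omega_t$ contains the convex hull of $\{q^-_1,q^-_2,A,B,q^+_1,q^+_2\}$, which contains two trapezia: one with parallel sides of lengths $x(q^-_1)-x(q^-_2)$ and $x(A)-x(B)$ and height $\ell_-(t)-C_\delta$, and one with parallel sides $x(q^+_1)-x(q^+_2)$ and $x(A)-x(B)$ and height $\ell_+(t)-C_\delta$. Using \eqref{eq:ell} this gives
\[
2\pi(-t)=\area(t)\ge \tfrac12\big(\pi r_- -\delta + \pi-\delta\big)\big(-r_-^{-1}t-C_\delta\big)+\tfrac12\big(\pi r_+ -\delta + \pi-\delta\big)\big(-r_+^{-1}t-C_\delta\big)\,.
\]
Rearranging, the coefficient of $(-t)$ on the right is $\tfrac12\big(\pi+\pi r_-^{-1}-2\delta r_-^{-1}\big)+\tfrac12\big(\pi+\pi r_+^{-1}-2\delta r_+^{-1}\big)$, so
\[
\big(\pi r_-^{-1}+\pi r_+^{-1}-2\delta(r_-^{-1}+r_+^{-1})-2\pi\big)(-t)\le \big(\pi r_- +\pi r_+ +2\pi-4\delta\big)\tfrac{C_\delta}{2}
\]
for all $t<t_\delta$. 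Letting $t\to-\infty$ forces the bracket on the left to be $\le 0$, i.e. $r_-^{-1}+r_+^{-1}\le 2 + \tfrac{2\delta}{\pi}(r_-^{-1}+r_+^{-1})$; letting $\delta\to0$ yields $r_-^{-1}+r_+^{-1}\le 2$. Since $r_\pm\le1$ (each asymptotic Grim Reaper lies in the strip $\Pi$ of width $\pi$, so its width $\pi r_\pm$ is at most $\pi$), we have $r_-^{-1}\ge1$ and $r_+^{-1}\ge1$, whence $r_-^{-1}=r_+^{-1}=1$, i.e. $r_\pm=1$.

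I expect the only delicate point to be the bookkeeping that converts the smooth convergence in Lemma~\ref{lem:asymptotic translators compact} into the existence of the horizontal chords $q^\pm_i(t)$ with the stated width and height bounds holding \emph{uniformly} for all $t$ below a single threshold $t_\delta$ — this is exactly the manoeuvre already carried out in Lemma~3.3, so it should go through verbatim after accounting for the two tips; one must also make sure that $A(t)$ and $B(t)$ are well-defined and sit between the two tips, which follows from convexity and the fact that $p^-(t),p^+(t)$ are the extreme vertical points while $y(p)=0$. Everything else is elementary planar geometry (area of a union of two trapezia) and the exact identity $\area(t)=-2\pi t$.
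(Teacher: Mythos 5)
Your proposal is correct and follows essentially the same route as the paper: the same two trapezia built from the chord $AB$ on the $x$-axis and the horizontal chords of width close to $\pi r_\pm$ near the two tips, the same use of $\area(t)=-2\pi t$ together with the Harnack displacement bound \eqref{eq:ell}, and the same limit $t\to-\infty$, $\delta\to 0$ giving $r_+^{-1}+r_-^{-1}\le 2$. Your explicit remark that $r_\pm\le 1$ (since the asymptotic Grim Reapers lie in $\Pi$) is the step the paper leaves implicit in its final "hence $r_+=r_-=1$".
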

\begin{proof}
%
%\begin{wrapfigure}{l}{7cm}
%\vspace{-0.5cm}
%\includegraphics[width=7cm, height=8cm]{../Area}
%\end{wrapfigure} 

%\begin{figure}
%\includegraphics[width=7cm]{../Area}
%\caption{The enclosed area is greater than that of the rectangle less that of the four shaded triangles.}\label{fig:quadrilateral}
%\end{figure} 

%Assuming that the conclusion of the lemma is not true, we will use the lower displacement estimate \eqref{eq:ell} to prove that the area grows too quickly as $t\to-\infty$, contradicting \eqref{eq:area}. 
Let $A(t)$ and $B(t)$ be the points of intersection of $\Gamma_t$ and the $x$-axis, with $x(B(t))\leq x(A(t))$. For every $\d\in(0,1)$ we can find $t_\delta$ such that, for all $t<t_\d$,
\[
x(A(t))-x(B(t))\ge \pi-\d\,.
\]
By Lemma \ref{lem:asymptotic translators compact}, choosing $t_\delta$ smaller if necessary, we can find a constant $C_\d$ and points $p_1^+(t)$, $p_2^+(t)$, $p_1^-(t)$ and $p_2^-(t)$ on $\Gamma_t$ such that, for all $t<t_\d$,
\begin{equation*}%\label{eq:ppm}
\begin{split}
y(p^\pm_1(t))={}&y(p^\pm_2(t))\\
%\frac12 C_\d \le 
|y(p^\pm(t))-{}&y(p_{1,2}^\pm(t))|\le C_\d\\
\pi r_\pm-\d\le x(p_1^\pm(t))-{}&x(p_2^\pm(t))<\pi r_\pm\,.%\\
%\left\vert \theta(p_1^{\pm})-\frac{\pi}{2}\right\vert\leq \delta\,, \;\;\;\;{}&\left\vert \theta(p_2^{\pm})+\frac{\pi}{2}\right\vert\leq \delta\,.
\end{split}
\end{equation*}
Estimating the area from below by that of the two trapezia $ABp_1^\pm p_2^\pm$, we find
\begin{align*}
-2\pi t\geq{}& \area(t)\\%\geq \vert ABp_1^+p_2^+\vert+\vert ABp_1^-p_2^-\vert\\
\geq{}&\frac{1}{2}\left(\pi(r_++1)-2\d\right)\left(-r_+^{-1}t- C_\d\right)+\frac{1}{2}\left(\pi(r_-+1)-2\d\right)\left(-r_-^{-1}t- C_\d\right)\,.
\end{align*}
That is,
\begin{align*}
-\left((r_+^{-1}+r_-^{-1})(\pi-2\delta)-2\pi\right)t\leq{}&(\pi(r_++r_-+2)-4\delta)C_\delta\,.
\end{align*}
Taking $t\to-\infty$ and then $\delta\to 0$ yields $r_+^{-1}+r_-^{-1}\leq 2$ and hence $r_+=r_-=1$.
\end{proof}

We can now prove that the solution is the Angenent oval.
\begin{theorem}\label{thm:compact}
Let $\{\Gamma_t\}_{t\in(-\infty, 0)}$ be a compact, convex ancient solution which is contained in the strip $\Pi:=\{(x,y)
:|x|<\pi/2\}$ and in no smaller strip. Then $\{\Gamma_t\}_{t\in(-\infty,0)}$ is an Angenent oval.%Then $\{\Gamma_t\}_{t\in(-\infty,0)}$ is a vertical translate of the Angenent oval.
\end{theorem}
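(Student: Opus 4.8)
The plan is to mirror the structure of the noncompact proof (Theorem~\ref{thm:noncompact case}) by establishing that the compact solution is asymptotic to the correct Angenent oval at both ends and then closing the argument with an Alexandrov-reflection/avoidance comparison. First I would exploit Lemma~\ref{lem:w}: since $r_\pm=1$, the solution is asymptotic (after translating by $p^\pm(s)$) to the unit-speed Grim Reapers $\{-\mathrm{G}_t\}$ and $\{\mathrm{G}_t\}$ as $s\to-\infty$. Combining this with the Harnack monotonicity of $\ell_\pm(t)+t$ (each is nonincreasing in $t$, by the same argument as in the noncompact case), the limits $C_\pm:=\lim_{t\to-\infty}(\ell_\pm(t)+t)$ exist in $(-\infty,\infty]$; in fact the lower Grim Reaper barrier forces $C_\pm\in(-\infty,\infty]$ with the relevant sign, and I expect $C_++C_-$ to be the quantity one must pin down. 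The key point is that the Angenent oval $\mathrm{A}_t$ has the property that its upper and lower tips are asymptotic to translated Grim Reapers whose asymptotic displacements are determined, and one must show the given solution has exactly those displacements.

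Next I would run the Alexandrov reflection principle about horizontal lines $\partial H^{\mathrm{horiz}}_\beta=\{y=\beta\}$ to show the solution is reflection symmetric about a horizontal line, and about vertical lines to show it is symmetric about the $y$-axis; convexity together with the Grim Reaper asymptotics at the two tips gives the required disjointness of the reflected curve in the appropriate halfspace for all sufficiently negative $t$, and the reflection principle propagates this to all $t<0$, exactly as in Lemma~\ref{lem:refl}. Having established that $\Gamma_t$ is symmetric across both axes (so in particular across the $x$-axis, consistent with the normalization $y(p)=0$), I would then compare $\Gamma_t$ directly with a one-parameter family of Angenent ovals. Because the Angenent oval interpolates between the shrinking circle (as $t\to0$) and a pair of Grim Reapers (as $t\to-\infty$) inside the slab $\Pi$, and because our solution has the same slab, the same extinction point, and the same maximal-width asymptotics $r_\pm=1$, one can sandwich: for each $\tau>0$ the time-translated solution $\Gamma^\tau_t=\Gamma_{t+\tau}$ has $\ell_\pm$ strictly dominating that of $\mathrm{A}_t$ near $t\to-\infty$, so by the reflection/strong-maximum-principle argument $\Gamma^\tau_t$ lies outside $\mathrm{A}_t$; letting $\tau\to0$ shows $\Gamma_t\supseteq\mathrm{A}_t$ for all $t<0$. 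Since both shrink to the same point at $t=0$, the avoidance principle forbids strict nesting, and the strong maximum principle forces $\Gamma_t=\mathrm{A}_t$ for all $t$.

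The main obstacle I anticipate is the comparison with the Angenent oval itself: unlike the Grim Reaper (which is rigid as a translator and hence a clean barrier), the Angenent oval is a genuinely $t$-dependent closed curve, so the reflection argument must be set up carefully to produce an honest barrier. Concretely, one needs that at each height $|y|<\ell_-(t)$ the Angenent oval is \emph{wider} than some fixed sub-slab while still being contained in $\Pi$, and that the Grim Reaper asymptotics at the tips of $\Gamma_t$ match those of $\mathrm{A}_t$ \emph{with the correct displacement constant} (not merely up to an additive error) — this is where the claim $C_++C_-=0$, or its analogue, must be extracted, presumably by the same trapezium/area estimate refined to track the additive constant, or by a direct Alexandrov reflection about horizontal lines comparing the two tips of $\Gamma_t$ to each other. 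Once the displacements are matched, the avoidance-principle endgame is routine. I would also need to invoke, as in the noncompact case, that the only eternal translating closed-curve-free limits in a slab are Grim Reapers, and that convexity plus $r_\pm=1$ rules out the solution degenerating to the boundary lines of $\Pi$.
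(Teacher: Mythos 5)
Your proposal follows the same route as the paper, which proves this theorem by simply repeating the noncompact argument: Grim Reaper asymptotics at the two tips with $r_\pm=1$ (Lemmas \ref{lem:asymptotic translators compact} and \ref{lem:w}), Alexandrov reflection to obtain symmetry across the $y$-axis, and then a barrier comparison of the time-translated solution with the Angenent oval closed out by the avoidance and strong maximum principles. You also correctly identify the one genuinely delicate point (pinning down the asymptotic displacement constants so that the Angenent oval serves as an honest two-sided barrier), which is exactly the analogue of the claim $C=0$ in the proof of Theorem \ref{thm:noncompact case}.
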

\begin{proof}
The claim follows as in the noncompact case: using Alexandrov reflection, we first show that the solution is symmetric with respect to reflections across the $y$-axis and then we compare with the Angenent oval.
\end{proof}

%\bibliographystyle{acm}
%\bibliography{../pancakes}

%\end{document}

\section{Completing the classification}\label{sec:Wang}

As we have mentioned, Theorem~\ref{thm:main} already follows from Theorems \ref{thm:noncompact case} and \ref{thm:compact} and the results of X.-J~Wang mentioned in the introduction. % \cite[Theorem 1.1 and Corollary 1.2]{Wa11}. % that show  that a convex ancient solution must either be the shrinking circles or else lie in a  strip region. Inspired by the techniques pioneered by Wang, in this section  we provide a simpler proof of the aforementioned result.
We present here a different proof of the required results (Lemmas \ref{lem:blowdown} and \ref{lem:slab}), which partly follows Wang's original arguments (particularly \cite[Lemmas 2.1, 2.2, 4.1 and 4.4]{Wa11}).

Let $\{\Gamma_t\}_{t\in (-\infty, 0)}$ be a convex ancient solution to curve shortening flow, not necessarily compact. After a spacetime translation, we may arrange that the solution reaches the origin at time zero. We can also arrange that the origin is a regular point in the noncompact case and a singular point in the compact case \cite{GaHa86}. Denote by $\gamma(\cdot,t):N_t\to\R^2$ the turning angle parametrization for $\Gamma_t$, where $N_t\subset \R/2\pi \Z$. That is, $\gamma(\theta,t)$ is the point on $\Gamma_t$ such that $\gamma_\theta(\theta,t)=(\cos\theta,\sin\theta)$. For compact solutions $N_t=\R/2\pi\Z$ for all $t$ and $\Gamma_t$ bounds a disk, while for noncompact solutions $N_t=(\theta_1(t),\theta_2(t))$ with $\theta_2(t)-\theta_1(t)\leq \pi$ and $\Gamma_t$ is a graph over some line. %We claim that the interval $(\theta_1(t), \theta_2(t))$ is invariant in time. To see this recall that $d\th/dt=\k_s$, but $\k, \k_s\to 0$ for points on the curve going to infinity \texttt{Why does $\kappa_s$ go to zero?}. Hence, in the latter case the curves are graphs. By a rotation we can assume that they are graphs over an interval $(a, b)$ of the $x$-axis. By using Angenent ovals as barriers it is not hard to see that the domain of definition $(a,b)$ is invariant in time.
 
We begin by classifying the possible blow-downs of $\{\Gamma_t\}_{t\in(-\infty,0)}$. Our main tool is the monotonicity formula. 
%
%\begin{lemma}\label{lem:closest point}
%Let $\{\Gamma_t\}_{t\in(-\infty,0)}$ be a convex ancient solution to curve shortening flow. Then
%\[
%\min_{p\in \Gamma_t}\vert p\vert\le \sqrt{-2t}
%\]
%for all $t<0$.
%\end{lemma}
%\begin{proof}
%Since the shrinking circle reaches the origin at time zero, it must intersect the solution at all negative times by the avoidance principle. The claim follows.
%\end{proof}
\begin{comment}
\begin{proof} Let $\area(t)$ be the area enclosed by $\Gamma_t$, in case of a compact solution, and the area enclosed by $\Gamma_t$ and the $x$-axis, in the case of a graphical solution.  Note that in the latter case, by tilting the $x$-axis if necessary, we can always take the graph so that $\theta_1\ne 0$ and $\theta_2\ne 0$ and thus this region is bounded for all $t$. Then, in the former case
\[
\frac{ dA(t)}{dt}=-2\pi \implies A(t) =-2\pi t
\]
and thus the circle of radius $\sqrt{-2t}$ must intersect $\Gamma_t$.
 
For the non-compact case we have
\[
\frac{ dA(t)}{dt}=-\int_{\Gamma_t\cap \{y<0\}}\k ds\ge-2\pi+(\pi+\theta_1)+(\pi-\theta_2) \ge -\pi 
\]
and thus $A(t)\le A(0)-\pi t$ which implies that the circle of radius $\sqrt{-2t}$ must intersect $\Gamma_t$ at some point in the halfplane $\{y<0\}$.
\end{proof}
\end{comment}
%
Recall that the Gaussian area $\Theta$ of $\{\Gamma_t\}_{t\in(-\infty,0)}$ is defined, for $t<0$, by
\[
\Theta(t):=(-4\pi t)^{-\frac{n}{2}}\int_{\Gamma_t}\mathrm{e}^{\frac{\vert p\vert^2}{4t}}d\mathcal{H}^1(p)\,.
\]
\begin{lemma} \label{entropy bound}
There exists a constant $C=C(n)<\infty$ such that
\[
\sup_{k>0}\frac{1}{k^{n/2}}\int_{M^n} e^{\frac{-|y|^2}{k}}d\mathcal{H}^n(y)<C
\]
for any convex hypersurface $M^n$ of $\R^{n+1}$. %If $n=1$ and $M^n$ is a graph, then
%\[
%\inf_{k>0}\frac{1}{\sqrt{k}}\int_{M} e^{\frac{-|y|^2}{k}}d\mathcal{H}^1(y)\ge \sqrt \pi \,.
%\]
\end{lemma}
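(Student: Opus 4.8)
The plan is to reduce the estimate to a one-dimensional computation using convexity, which makes the integral largest when $M^n$ is as ``flat'' as possible. The key structural fact is that for a convex hypersurface $M^n = \partial \Omega$, the nearest-point projection $\pi_L$ onto any hyperplane $L$ through a suitable point is, on each of the two graphical pieces of $M^n$ over $L$, distance non-increasing (indeed $1$-Lipschitz, since convex graphs have gradient bounded by the slope and projecting only decreases length). Concretely, I would first reduce to the case $k=1$ by the substitution $y \mapsto y/\sqrt{k}$, noting that $\frac{1}{k^{n/2}} \mathcal{H}^n$-measure is scaling invariant and $\sqrt{k}\,M^n$ is again convex, so it suffices to bound $\int_{M^n} e^{-|y|^2}\,d\mathcal{H}^n(y)$ by a dimensional constant uniformly over all convex $M^n$.

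Next I would bound the Gaussian integral by splitting $M^n$ into coordinate graph pieces. Since $M^n$ is convex, for each of the $n+1$ coordinate directions $e_i$ the set $M^n$ decomposes into (at most) two pieces on which it is a graph of a convex/concave function over the hyperplane $e_i^\perp$. On such a graph piece $G$, projection $\proj_{e_i^\perp}\colon G \to e_i^\perp$ is injective and $1$-Lipschitz, hence area non-increasing, so
\[
\int_{G} e^{-|y|^2}\,d\mathcal{H}^n(y) \le \int_{\proj_{e_i^\perp}(G)} e^{-|z|^2}\,d\mathcal{H}^n(z) \le \int_{\R^n} e^{-|z|^2}\,dz = \pi^{n/2}\,.
\]
Summing over the $2(n+1)$ graph pieces that cover $M^n$ gives
\[
\int_{M^n} e^{-|y|^2}\,d\mathcal{H}^n(y) \le 2(n+1)\,\pi^{n/2} =: C(n)\,,
\]
which is the desired dimensional constant.

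The main point requiring care is the claim that each graphical piece projects with Lipschitz constant $1$: this is immediate once one knows the piece is a graph $y = (z, u(z))$ with $u$ convex (or concave), because the projection forgets the last coordinate and the Jacobian factor $\sqrt{1 + |\nabla u|^2} \ge 1$, so $d\mathcal{H}^n$ on the graph dominates $d\mathcal{H}^n = dz$ on the base — no Lipschitz bound on $u$ itself is needed. The only mild subtlety is covering $M^n$ by finitely many such pieces: for a convex hypersurface this is classical (partition according to the sign of the $i$-th component of the outward normal, using any fixed coordinate direction $e_i$; a single direction already suffices, giving the cleaner constant $C(n) = 2\pi^{n/2}$, and one need not worry about the measure-zero ``equator'' where the normal is tangent to $e_i^\perp$). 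I would also remark that the bound is uniform precisely because nothing depends on $M^n$ beyond convexity, which is exactly what is needed for the monotonicity-formula argument that follows.
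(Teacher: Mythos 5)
Your scaling reduction to $k=1$ is fine, but the key step of your argument is false as stated, and the problem is a direction-of-inequality error. The fact that $\proj_{e_i^\perp}$ is $1$-Lipschitz gives $\mathcal{H}^n(\proj_{e_i^\perp}(A))\le\mathcal{H}^n(A)$, i.e.\ projection \emph{decreases} measure; equivalently, as you yourself observe, the Jacobian factor $\sqrt{1+|\nabla u|^2}\ge 1$ makes the measure on the graph \emph{dominate} Lebesgue measure on the base. That is exactly the wrong direction for the claimed bound
\[
\int_{G} e^{-|y|^2}\,d\mathcal{H}^n(y) \le \int_{\proj_{e_i^\perp}(G)} e^{-|z|^2}\,dz\,,
\]
which would require the pushforward of the restriction of $\mathcal{H}^n$ to $G$ to be dominated by $dz$. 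A concrete counterexample with $n=1$: let $G$ be the segment from $(0,0)$ to $(\delta,L)$, a convex graph over $[0,\delta]$; then $\int_G e^{-|y|^2}\,d\mathcal{H}^1$ is roughly $\int_0^L e^{-s^2}\,ds\approx\sqrt{\pi}/2$, while $\int_0^\delta e^{-x^2}\,dx\approx\delta$ is arbitrarily small. Relatedly, the ``equator'' where the normal is orthogonal to $e_i$ need not be $\mathcal{H}^n$-null: for $M=\partial([0,\delta]\times[-L,L])$ the two long vertical sides carry almost all of the measure and are not graphs over the $x$-axis at all, so a single coordinate direction certainly does not suffice.

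The idea is salvageable if you decompose by the direction of the normal rather than into coordinate graphs: set $G_i^\pm:=\{p\in M:\pm\nu_i(p)\ge (n+1)^{-1/2}\}$, where $\nu$ is the outer unit normal. These $2(n+1)$ sets cover $M$ up to a null set, convexity makes $\proj_{e_i^\perp}$ injective on each, and on $G_i^\pm$ the graph function satisfies $\sqrt{1+|\nabla u|^2}=|\nu_i|^{-1}\le\sqrt{n+1}$, so each piece contributes at most $\sqrt{n+1}\,\pi^{n/2}$ and one gets $C(n)=2(n+1)^{3/2}\pi^{n/2}$. Note that the paper takes a genuinely different and arguably cleaner route: it bounds $\mathcal{H}^n(M\cap B_R(0))$ by $c_nR^n$ (monotonicity of surface area under inclusion of convex bodies) and then sums over the annuli $B_{(i+1)\sqrt{k}}(0)\setminus B_{i\sqrt{k}}(0)$ against the weight $e^{-i^2}$, avoiding graphs and Jacobians entirely.
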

\begin{proof}
Let $A_i= B_{(i+1)\sqrt k}(0)\setminus B_{i\sqrt k}(0)$ for $i\in \N$. Since $M$ is convex,
\[
|M\cap A_i|\le |B_{(i+1)\sqrt k}(0)|= c_n(i+1)^n k^{n/2}
\]
and hence
\[
\begin{split}
\frac{1}{k^{n/2}}\int_{M} e^{\frac{-|y|^2}{k}}d\mathcal{H}^n(y)={}&\frac{1}{ k^{n/2}}\sum_{i=0}^\infty\int_{M\cap A_i} e^{\frac{-|y|^2}{k}}d\mathcal{H}^n(y)\\
\le{}&\frac{1}{ k^{n/2}}\sum_{i=0}^\infty c_n(i+1)^n k^{n/2} e^{-i^2}=c_n\sum_{i=0}^\infty(i+1)^n e^{-i^2}\,,
\end{split}
\]
which proves the claim. %To prove the second, set $A^\e_i= B_{\e(i+1)\sqrt k}(0)\setminus B_{\e\sqrt k}(0)$ for some $\e>0$ and $i\in \N$. Since $M$ is a graph,
%\[
%|M\cap A_i|\ge 2\e\sqrt k
%\]
%and hence
%\[
%\begin{split}
%\frac{1}{\sqrt k}\int_{M} e^{\frac{-|y|^2}{k}}d\mathcal{H}^1(y)&=\frac{1}{\sqrt k}\sum_{i=0}^\infty\int_{M\cap A_i} e^{\frac{-|y|^2}{k}}d\mathcal{H}^1(y)\ge \frac{1}{ \sqrt k}\sum_{i=0}^\infty 2\e\sqrt k e^{-\e^2(i+1)^2}\\
%&\ge2\e\sum_{i=1}^\infty\int_{i}^{i+1} e^{-\e^2x^2}d\mathcal{H}^1(x)=2\int_\e^\infty e^{-y^2}d\mathcal{H}^1(y)\,.
%\end{split}
%\]
%Taking $\e\to 0$ yields the result.
\end{proof}

\begin{lemma}\label{lem:blowdown}
The family of rescaled solutions $\{\lambda\Gamma_{\lambda^{-2}t}\}_{t\in(-\infty,0)}$ converges locally uniformly in the smooth topology, as $\lambda\to 0$, to either
\begin{itemize}
\item[(a)] the shrinking circle,
\item [(b)] a stationary line of multiplicity one passing through the origin, or
\item [(c)] a stationary line of multiplicity two passing through the origin. 
\end{itemize}
%\texttt{I think we need to build in a $\lambda$-dependent rotation to get convergence: We always get subsequential convergence to a line but the limiting line could be rotating as $\lambda\to 0$}.
In case \emph{(a)}, $\{\Gamma_t\}_{t\in(-\infty,0)}$ is the shrinking circle. In case \emph{(b)}, $\{\Gamma_t\}_{t\in(-\infty,0)}$ is a stationary line passing through the origin.
\end{lemma}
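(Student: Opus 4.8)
The plan is to use the Gaussian density (Huisken's monotonicity formula) together with the entropy bound of Lemma~\ref{entropy bound} to extract a self-shrinker as the blow-down limit, and then invoke Abresch--Langer / Gage--Hamilton to identify it. First I would observe that, by Lemma~\ref{entropy bound} (applied with $n=1$ and $k=-4\pi t$), the Gaussian area $\Theta(t)$ is uniformly bounded above; by Huisken's monotonicity formula it is also monotone nondecreasing as $t\to-\infty$ (equivalently, $\Theta$ is nonincreasing in $t$), so $\Theta_{-\infty}:=\lim_{t\to-\infty}\Theta(t)$ exists and is finite. Parabolically rescaling, $\Theta(t)$ is scale-invariant, so each rescaled flow $\{\lambda\Gamma_{\lambda^{-2}t}\}$ has the same Gaussian area profile; as $\lambda\to 0$ the monotonicity formula forces the limit flow to have constant Gaussian area $\Theta_{-\infty}$, hence (by the rigidity case of monotonicity) to be a self-shrinker, i.e. a homothetically shrinking solution $\{\sqrt{-t}\,\Sigma\}$ for some curve $\Sigma$ with $\vec\kappa=-\tfrac12 p^\perp$. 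The compactness needed to take the limit comes from the curvature being locally bounded: in the convex setting one has local area bounds (from Lemma~\ref{entropy bound} one gets density bounds, hence via the convexity local area bounds), and then Brakke/White regularity or simply the fact that a convex ancient solution has curvature bounded on compact subsets of spacetime (as already used in Lemma~\ref{lem:asymptotic translator noncompact}) gives smooth subsequential convergence, with possible multiplicity.

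Next I would classify the shrinker. The limit is a convex (weakly convex) shrinking curve, possibly with multiplicity. By the Abresch--Langer classification of closed self-shrinking curves — or, restricting to the embedded/convex case, the Gage--Hamilton theorem — the only embedded convex self-shrinker is the round circle, and the only noncompact ones are lines through the origin. If the blow-down is compact it must therefore be the shrinking circle (with multiplicity one, since higher multiplicity would violate the entropy value $\Theta_{-\infty}<\Theta(\text{circle})\cdot 2$ — more precisely, the multiplicity-one circle already has the maximal possible density among embedded shrinkers, so no multiplicity issue arises there); if it is noncompact it is a line through the origin, with multiplicity one or two according to whether the original solution is noncompact (graphical, so the rescalings stay embedded graphs and the limit is multiplicity one) or compact (the two sides of the shrinking oval can collapse onto the same line, giving multiplicity two). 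This gives exactly the trichotomy (a), (b), (c).

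Finally, the rigidity statements. In case (a), the blow-down is the shrinking circle, which has Gaussian density $\Theta_{-\infty}=\Theta(\text{circle})$, the value characterizing the circle; by the equality case of Huisken's monotonicity (or White's regularity theorem), if $\Theta(t)$ equals this value for one $t$ then the flow is a self-shrinker for all earlier times, and being convex and ancient it must globally be the shrinking circle — alternatively, Stone's/Huisken's gap means any convex ancient solution with $\Theta_{-\infty}$ equal to the circle's density is itself the circle. In case (b), the blow-down is a multiplicity-one line; then $\Theta_{-\infty}=1$, the density of a smooth point, so by White's regularity theorem the original flow is smooth with bounded curvature near the origin backward in time, and a convex ancient solution whose blow-down is a single line of multiplicity one must be that line itself (otherwise its enclosed region would be bounded and the blow-down compact, contradiction; more carefully, noncompactness plus the multiplicity-one line blow-down forces the curvature to vanish, and a convex ancient solution with $\kappa\equiv 0$ somewhere is, by the strong maximum principle applied to the curvature evolution, a stationary line).

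The main obstacle I expect is making the convergence to the blow-down rigorous with control on the multiplicity: one must rule out, in the noncompact graphical case, that rescalings converge to a multiplicity-two line, and must show in the compact case that a higher-density limit cannot occur — both rely on combining the convexity (which caps local area), the entropy bound from Lemma~\ref{entropy bound}, and the classification of shrinkers, and care is needed because Huisken's monotonicity in its cleanest form is stated for the compact case, so the noncompact graphical case requires either a localized monotonicity formula (Ecker's) or an argument using the area growth estimate $\area(t)\le -\pi t$ already derived in Section~\ref{sec:noncompact} to justify integrability of the Gaussian weight.
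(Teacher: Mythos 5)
Your overall strategy (entropy bound of Lemma~\ref{entropy bound} to justify Huisken's monotonicity formula, rigidity of the monotonicity to force the blow-down to be a self-shrinker, classification of convex shrinkers, and Gaussian-density comparison for the rigidity statements) is exactly the paper's route, and most of the steps are sound. However, there is one genuinely false step in your classification of the multiplicity. You assert that a line blow-down has multiplicity one precisely when the original solution is noncompact, ``graphical, so the rescalings stay embedded graphs and the limit is multiplicity one.'' This is wrong: the Grim Reaper is noncompact and embedded, yet its blow-down is the $y$-axis with multiplicity \emph{two} (at time $\lambda^{-2}t$ its two arms lie in the strip $|x|<\pi/2$ and reach up to height $+\infty$; after rescaling by $\lambda$ both arms collapse onto the $y$-axis). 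Embeddedness of the rescaled curves does not prevent the two sides of the strip from converging to the same line. If your dichotomy were correct, your own case~(b) rigidity argument would show that every noncompact convex ancient solution is a stationary line, contradicting the existence of the Grim Reaper. The correct statement, which the paper uses, is that convexity caps the multiplicity at two (and forbids multiplicity for a circle limit), while the multiplicity-one case is singled out not by noncompactness but by the density comparison: a multiplicity-one line has Gaussian density $1$, strictly less than that of the shrinking circle, so the forward limit at the (regular) spacetime origin must also be a multiplicity-one line, whence $\Theta$ is constant and the solution is stationary.

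Two smaller omissions: you never rule out that the rescaled curves drift off to infinity, leaving an empty limit --- the paper handles this by the avoidance principle against the shrinking circle reaching the origin at time zero, which gives $\min_{p\in\Gamma_t}|p|\le\sqrt{-2t}$; and you only produce subsequential limits, whereas the lemma asserts convergence of the full family as $\lambda\to 0$ --- the paper notes that uniqueness of the limit follows from the nestedness (by convexity) of the limiting convex regions of different subsequential limits. Your case~(a) and case~(b) rigidity arguments via density/White's theorem are essentially the paper's, and your concern about the validity of the monotonicity formula in the noncompact setting is already resolved by Lemma~\ref{entropy bound}, which holds for arbitrary convex hypersurfaces.
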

\begin{proof}
Since the shrinking circle reaches the origin at time zero, it must intersect the solution at all negative times by the avoidance principle. Thus,
\[
\min_{p\in \Gamma_t}\vert p\vert\le \sqrt{-2t}\,.
\]
Since the speed, $\kappa$, of the solution is bounded in any compact subset of $\R^2\times(-\infty,0)$ after the rescaling, given any sequence $\lambda_i\searrow 0$, we can find a subsequence along which $\{\lambda_i\Gamma_{\lambda_i^{-2}t}\}_{t\in(-\infty,0)}$ converges locally uniformly in the smooth topology to a non-empty limit flow. We claim that the limit is always a shrinking solution. By Lemma \ref{entropy bound}, $\Theta(t)$ is uniformly bounded, so the monotonicity formula \cite{Hu90} holds. That is,
\begin{equation}\label{eq:monotonicity formula}
\frac{d}{dt}\Theta(t)=-\int_{\Gamma_t}\left\vert\vec\kappa(p)+\frac{p^\perp}{-2t}\right\vert^2d\mathcal{H}^1(p)\,.
\end{equation}
It follows that $\Theta(t)$ converges to some limit as $t\to-\infty$. But then
\begin{align*}
-\int_a^b\!\!\!\int_{\lambda\Gamma_{\lambda^{-2}t}}\left\vert\vec\kappa(p)+\frac{p^\perp}{-2t}\right\vert^2d\mathcal{H}^1(p)={}&\Theta_\lambda(b)-\Theta_\lambda(a)\\
={}&\Theta(\lambda^{-2}b)-\Theta(\lambda^{-2}a)\to0
\end{align*}
as $\lambda\to 0$ for any $a<b<0$, where $\Theta_\l$ is the Gaussian area of the rescaled flows. We conclude that the integrand vanishes identically in the limit and hence any limit of $\{\lambda\Gamma_{\lambda^{-2}}\}_{t\in(-\infty,0)}$ along a sequence of scales $\lambda_i\to 0$ is a shrinking solution to curve shortening flow. But the only convex examples which can arise are the shrinking circle and the stationary lines of multiplicity either one or two \cite{AbLa,EpWein}. By \eqref{eq:monotonicity formula},
\[
\lim_{\lambda\to 0}\Theta_\lambda(t)\geq \lim_{\lambda\to\infty}\Theta_\lambda(t)\,.
\]
If the shrinking circle arises as a backwards limit (i.e. as $\lambda\to0$), then the solution is compact and hence, by the Gage--Hamilton theorem \cite{GaHa86}, the forwards limit (i.e. as $\lambda\to\infty$) is also the shrinking circle. It follows from \eqref{eq:monotonicity formula} that $\Theta$ is constant and hence the shrinking circle. Else, the backwards limits are all stationary lines. Note that the backwards limit is unique in this case since, by convexity, the limiting convex region bounded by each subsequential limit is contained in the limiting convex region bounded by any other. If the backwards limit has multiplicity one, then the forwards limit cannot be a shrinking circle (by the monotonicity formula, since the latter has larger Gaussian area). So the solution is noncompact and, since the spacetime origin is a regular point, the forwards limit is also a stationary line. We conclude from \eqref{eq:monotonicity formula} that $\Theta$ is constant and the solution a stationary line. This completes the proof.
\end{proof}

It remains to show that when case \emph{(c)} of Lemma \ref{lem:blowdown} holds, then  $\{\Gamma_t\}_{t\in (-\infty, 0)}$ lies in a strip. A more general version of this statement is shown by X.-J. Wang \cite[Corollary 2.1]{Wa11}. We present here a version of Wang's argument adapted to our setting (cf. \cite[Lemmas 2.1 and 2.2]{Wa11}). The crucial ingredient is concavity of the arrival time. 

Set $\Omega=\cup_{t<0}\Omega_t$, where $\Omega_t$ is the convex domain bounded by $\Gamma_t$ and let $u:\Omega\to \R$ be the \emph{arrival time} of $\{\Gamma_t\}_{t\in(-\infty,0)}$. That is, $u(p)$ is the unique time $t<0$ such that $p\in\Gamma_t$. Then $\Gamma_t=\{x\in\Omega: u(x)=t\}$, and $u$ satisfies the \emph{level set flow}
\[
-|Du|\dvg\left(\frac{Du}{|Du|}\right)=1\,.
\]
Moreover, by hypothesis, $u\le 0$ and $u(0)=0$.

\begin{theorem}\cite[Lemmas 4.1 and 4.4]{Wa11}\label{claim:convexity}
The arrival time is concave.
\end{theorem}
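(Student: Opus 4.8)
The plan is to establish concavity of the arrival time $u$ by a maximum-principle argument applied to an auxiliary function built from second derivatives of $u$, following the strategy of Wang. First I would record the equation satisfied by $u$ in a form amenable to differentiation: writing the level-set equation as $F(D^2u, Du) = a^{ij}(Du)\,u_{ij} = -1$, where $a^{ij}(p) = |p|^2\delta^{ij} - p^ip^j$ (up to the sign convention forced by $u<0$, $|Du|\neq 0$ away from the origin since the flow is locally uniformly convex by the strong maximum principle), one differentiates twice in a fixed direction $e$. This produces a linear elliptic (in fact degenerate-elliptic, with kernel along $Du$) equation for $u_{ee}$ with a nonpositive-definite quadratic zeroth-order-type term coming from the convexity of $F$ in the Hessian slot; the first-derivative terms come from differentiating the coefficients $a^{ij}(Du)$. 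The upshot is that the smallest eigenvalue $\lambda_{\min}(x)$ of $D^2u(x)$ satisfies, in the viscosity/barrier sense, a differential inequality of the form $a^{ij}u_{ij}\text{-type operator applied to }\lambda_{\min} \le (\text{lower-order terms})$, so that $\lambda_{\min}$ cannot attain a negative interior minimum unless it is forced to by the behavior on $\partial\Omega$ or as $t\to-\infty$.

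Next I would handle the boundary and asymptotic behavior, which is where the real content lies. On the spatial boundary of $\Omega$, i.e.\ approaching $\Gamma_t$ for fixed $t<0$: here $|Du|\to|Du|$ finite and positive, and $u$ restricted to a neighborhood is, up to reparametrization, the arrival time of a smooth convex curve evolving with positive curvature, so the Hessian of $u$ is controlled — in particular the tangential second derivative of $u$ along $\Gamma_t$ is $-\kappa/|Du|^2 \cdot(\dots) \le 0$ by convexity of the timeslices, and the mixed/normal components are bounded, so $\lambda_{\min}$ does not go to $-\infty$ there and in fact is bounded above by a negative constant near each compact portion. As $t\to-\infty$, I would invoke Lemma~\ref{lem:blowdown}: after rescaling, the solution converges to a shrinking circle, a multiplicity-one line, or a multiplicity-two line, and in each case one computes directly that the rescaled arrival time is concave (for the shrinking circle $u(x) = -|x|^2/2$ exactly; for the line cases the solution is already classified as a stationary line by Lemma~\ref{lem:blowdown}, so $u$ is constant along the line and the statement is either vacuous or immediate). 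This gives an asymptotic sign condition $\lambda_{\min}\le 0$ in the blow-down limit that feeds into the maximum principle.

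The main obstacle — and the step I expect to require the most care — is the degeneracy of the linearized operator along the direction $Du$ combined with the unboundedness of the domain $\Omega$ in spacetime: a naive strong maximum principle does not immediately apply because the operator $a^{ij}(Du)\partial_i\partial_j$ annihilates the $Du$-direction, and because one must rule out the minimum of $\lambda_{\min}$ escaping to the noncompact ends (spatial infinity when $\Omega$ is a slab, and the infinite past). I would address the degeneracy by noting that in the $Du$-direction the second derivative is pinned by the PDE itself ($u$ is essentially affine-in-time, so $u_{\nu\nu}$ along the normal to $\Gamma_t$ is tied to $\kappa$ and $|Du|$), so it suffices to prove concavity of the restriction of $u$ to the level sets $\Gamma_t$ together with a Cauchy–Schwarz-type compatibility between tangential and normal second derivatives — this is precisely the computation in \cite[Lemma~4.1]{Wa11}. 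For the noncompactness I would localize: apply the maximum principle on exhausting compact sets $\Omega\cap(\{|x|\le R\}\times\{t\ge -T\})$, use the boundary and blow-down control above to bound the contribution from $\partial(\{|x|\le R\})$ and from $\{t=-T\}$, and let $R,T\to\infty$. Once $\lambda_{\min}\ge$ its boundary infimum, which is $\le 0$ but bounded, a further iteration (or a direct strong-maximum-principle argument once a non-strict concavity is in hand) upgrades this to genuine concavity of $u$ on all of $\Omega$.
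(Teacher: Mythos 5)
Your strategy---differentiate the level-set equation twice and run a maximum principle for an eigenvalue of $D^2u$---misses the device that actually makes the proof work, and the places where you try to close the argument are exactly where it breaks. The paper (following Wang) does not work with $u$ directly: for each fixed $t<0$ it introduces the auxiliary function $w=-\log(u-t)$ on $\Omega_t$, computes that it satisfies $\sum_{i,j}(\delta_{ij}-w_iw_j/|Dw|^2)w_{ij}=\mathrm{e}^{w}$, and applies the concavity/convexity maximum principle of Korevaar--Kawohl to conclude that $w$ is convex; this is legitimate precisely because (i) the transformed right-hand side $\mathrm{e}^w$ is increasing and harmonically concave in $w$, and (ii) $w\to+\infty$ at $\partial\Omega_t$, so there is no boundary obstruction. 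Neither condition holds for $u$ itself: the untransformed right-hand side is the constant $-1$, and $u$ is \emph{constant} on $\partial\Omega_t$ (each $\Gamma_t$ is interior to the global domain $\Omega$, whose true boundary is empty or a pair of lines), so no exhaustion-plus-boundary-control argument of the kind you sketch can supply the sign you need. Indeed the arrival time of a non-ancient convex flow is in general \emph{not} concave, so concavity cannot follow from the equation plus data at fixed $t$; it is extracted only in the limit $t\to-\infty$. Concretely, convexity of $w$ plus the elementary gradient bound $|Dw|\le C/(-t)$ for convex (concave) functions give $u_{ij}=\mathrm{e}^{w^t}\bigl(w^t_{ij}+w^t_iw^t_j\bigr)\le \bigl(C'/(-t)\bigr)\delta_{ij}$ with $w^t=\log(u-t)$, and ancientness lets $t\to-\infty$. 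Your proposal contains no substitute for either the logarithmic substitution or this quantitative $O(1/|t|)$ decay.

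Two further concrete errors. First, the signs are backwards: concavity of $u$ means $\lambda_{\max}(D^2u)\le 0$, whereas you argue that $\lambda_{\min}$ cannot attain a negative interior minimum, which is the statement relevant to convexity. Second, your asymptotic input from Lemma \ref{lem:blowdown} fails in the only case that matters: a multiplicity-two line blow-down (case (c)) does \emph{not} force the solution to be a stationary line---the Grim Reaper and the Angenent oval both have this blow-down---and for a multiplicity-two line the limiting ``arrival time'' is degenerate, so the blow-down gives no control on $D^2u$. Since case (c) is precisely the case in which Theorem \ref{claim:convexity} is subsequently used (Lemma \ref{lem:slab}), dismissing the line cases as ``vacuous or immediate'' leaves the argument with nothing in the one situation it must handle.
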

\begin{proof}
The result is proved by Wang \cite[Lemma 4.1, Lemma 4.4]{Wa11} using the concavity maximum principle of Korevaar and others (see, for example, \cite{Korevaar,Kaw}). We recall Wang's proof here since the result appears to be of fundamental importance.

For any $t<0$ define $w:{\Omega_t}\to\R$ by $w(p):=-\log(u(p)-t)$. We claim that $w$ is a convex function. To see this note that
\[ 
\begin{split}
\sum_{i,j=1}^2\left(\d_{ij}-\frac{w_i w_j}{|Dw|^2}\right)w_{ij}&=\sum_{i,j=1}^2\left(\d_{ij}-\frac{u_i u_j}{|Du|^2}\right)\left(-\frac{u_{ij}}{u-t}+\frac{u_iu_j}{(u-t)^2}\right)\\
&=\frac{1}{u-t}=e^{w}\,.
\end{split}
\]
Since $w\to e^w$ is increasing, $w\mapsto\mathrm{e}^{-w}$ is concave and $w\to +\infty$ as we approach the boundary $\partial \Omega_t$, the claim follows from the concavity maximum principle (see \cite[Theorem 3.13]{Kaw}).
%
%So we can apply a maximum principle to the function $C(w, x_1, x_2)= w\left(\frac{x_1+x_2}{2}\right)-\frac12\left(w(x_1)+w(x_2)\right)$ (see \cite[Theorem 3.13]{Kaw}, which can be applied since $w\to e^w$ is increasing and harmonic concave) to show that it cannot attain an interior positive maximum. Moreover, since $w\to +\infty$ as we approach the boundary $\partial \Omega_t$ $C(w, x_1, x_2)$ cannot become positive as we approach $\partial \Omega_t\times\Omega_t)$ (see \cite[Lemma 3.11]{Kaw}).
%
Consider now a point $p\in \Omega$. Since $u$ is smooth there exists $M>0$ and $\d<\min\{1, \tfrac12\dist(q,\partial\Omega)\}$ so that
\[
-u(q)+|D(u)(q)|\le M\;\;\text{for all}\;\; q\in B_\d(p)\,.
\] 
Write now  $w^t= \log(u-t):{\Omega_t}\to\R$ and consider $t<-2M$ small enough so that $B_\d(x)\subset\Omega_t$. Since $w^t$ is concave,
\[
|Dw^t(x)|\le \sup_{y\in \partial B_\d(x)}\frac{|w^t(y)-w^t(x)|}{\d}\le \frac{\log(-t)-\log(-t-M)}{\d}\le\frac {2M}{-\d t}\,.
\]
Since $u= \exp(w^t)+t$, and using once more the concavity of $w^t$ as well as the above gradient estimate, we find
\[
u_{ij}=\exp(w^t)\left(w^t_{ij}+w^t_i w^t_j\right)\le \exp(w^t)w^t_i w^t_j\le \frac{C}{-t}\delta_{ij}
\]
in the sense of symmetric bilinear forms, where $C$ is a constant that is independent of $t$. Taking $t\to -\infty$ yields the claim.
\end{proof}

%\begin{wrapfigure}{l}{5cm}
%\begin{center}
%\includegraphics[width=5cm, height=10cm]{curve}
%\end{center}
%\end{wrapfigure} 

\begin{lemma}[{\cite[Corollary 2.1]{Wa11}}]\label{lem:slab}
In case \emph{(c)} of Lemma \ref{lem:blowdown}, $\{\Gamma_t\}_{t\in(-\infty,0)}$ is contained in a strip region.
\end{lemma}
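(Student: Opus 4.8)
The goal is to upgrade the information from case (c)—that the parabolic blow-down is a multiplicity-two line, say the $x$-axis—into the conclusion that $\{\Gamma_t\}_{t\in(-\infty,0)}$ is trapped between two horizontal lines. The natural vehicle is the concavity of the arrival time $u$ established in Theorem \ref{claim:convexity}, applied together with the precise asymptotic profile coming from the blow-down. The plan is as follows.

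First I would record what case (c) gives us quantitatively: after rotating so that the blow-down line is the $x$-axis, the rescaled solutions $\lambda\Gamma_{\lambda^{-2}t}$ converge to the $x$-axis with multiplicity two, which means that for each fixed $t<0$ the curve $\Gamma_t$, while possibly very long in the $x$-direction, has vertical extent $\ell_+(t)-\ell_-(t)=o(\sqrt{-t})$ (where $\ell_\pm(t)=\sup_{p\in\Gamma_t}\pm y(p)$ as in Section \ref{sec:compact}), and the two ``sheets'' of $\Gamma_t$ are both asymptotic to the $x$-axis. The key consequence I want to extract is a bound, for the noncompact case, on how fast $\Gamma_t$ can open up: because the blow-down is a single line rather than a half-plane or wedge, the Gauss image $N_t=(\theta_1(t),\theta_2(t))$ must satisfy $\theta_2(t)-\theta_1(t)\to 0$ (the tangent directions concentrate near horizontal) — but what I really need is control on the \emph{$y$-coordinate}, not the slope.

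Second, and this is where concavity enters, I would fix a spatial point and use the concavity of $u:\Omega\to\R$ to constrain the shape of the super-level sets $\Omega_t=\{u>t\}$. Concretely: $u$ is concave, $u\le 0$, and $u(0)=0$, so $0$ is the maximum of $u$ and $Du(0)=0$ (if $0\in\operatorname{int}\Omega$) or $u$ attains its max along a line through $0$. Since $u$ is concave with a single maximum value $0$, each $\Omega_t$ is convex and the family is nested. The multiplicity-two blow-down says $\frac{1}{\sqrt{-t}}\Omega_t$ converges (as $t\to-\infty$) to the degenerate set $\{y=0\}$ — i.e. the ``width in the $y$ direction'' of $\Omega_t$ is $o(\sqrt{-t})$. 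I would then argue by contradiction: if $\{\Gamma_t\}$ were \emph{not} contained in any strip $\{|y|<R\}$, then $\ell_+(t)-\ell_-(t)\to\infty$; combining this with concavity of $u$ — which forces $u$ restricted to any vertical line to be concave and hence, together with $u\le 0$, forces a quantitative lower bound of the form $-u(p)\gtrsim (\text{vertical distance from }p\text{ to the boundary sheets})^2 / (\text{something})$ — one derives that the vertical width of $\Omega_t$ must be \emph{at least} of order $\sqrt{-t}$, contradicting the $o(\sqrt{-t})$ bound from the multiplicity-two blow-down. The precise mechanism mirrors Wang's: concavity of $u$ gives $u(p)\le u(q)+Du(q)\cdot(p-q)$, and by choosing $q$ on the ``central'' part of $\Gamma_t$ and $p$ near the tip, one plays the (at most parabolic, by concavity and the speed bound) growth of $-u$ against the (at least comparable) vertical spread, forcing either an honest strip or the wrong blow-down.

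\textbf{Main obstacle.} The delicate point is making the dichotomy sharp: concavity of $u$ alone gives an \emph{upper} bound $u_{ij}\le \frac{C}{-t}\delta_{ij}$ (this is exactly the content of Theorem \ref{claim:convexity}), so along a vertical segment of length $L$ inside $\Omega_t$ one gets $-u\ge -u_{\text{center}} + cL^2/(-t)\cdot(\text{const})$ only if one also controls the \emph{first} derivative $u_y$ at the center — and that requires knowing that the horizontal cross-sections of $\Omega_t$ are ``centered'' in a way compatible with the symmetry one has not yet proved. Wang handles this by a careful choice of comparison point and by using that the blow-down is a \emph{line through the origin}; I would follow \cite[Lemmas 2.1, 2.2]{Wa11} closely here, using the concavity estimate $u_{ij}\le\frac{C}{-t}\delta_{ij}$ from Theorem \ref{claim:convexity} to show that the height function $\ell_+(t)+\ell_-(t)$ (the vertical ``position'') and the width $\ell_+(t)-\ell_-(t)$ grow at most like $\sqrt{-t}$ and $o(\sqrt{-t})$ respectively, and then bootstrap: if the width were unbounded it would, by the Harnack/translator asymptotics of Lemma \ref{lem:asymptotic translator noncompact}-type arguments, have to grow \emph{linearly} in $-t$ (a Grim-Reaper-like tip forming in the $y$-direction), contradicting $o(\sqrt{-t})$. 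Reconciling the two scalings — the $\sqrt{-t}$ coming from the monotonicity/blow-down and the linear $-t$ coming from any incipient translator — is what pins $\Gamma_t$ into a strip.
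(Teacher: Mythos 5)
Your setup is right---rotate so that the blow-down is a coordinate line, reduce to bounding the width of $\Omega_t$ in the orthogonal direction, and bring in the concavity of the arrival time---but the step that actually closes the argument is missing, and the shortcut you propose in its place does not work. You assert that if the width were unbounded it ``would have to grow linearly in $-t$'' (a Grim-Reaper-like tip forming), by Harnack/translator asymptotics, and that this contradicts the $o(\sqrt{-t})$ information from the multiplicity-two blow-down. But the Harnack inequality only gives that the curvature at the extremal point (in the turning-angle parametrization) is non-decreasing in $t$, hence has a limit $\kappa_\infty\ge 0$ as $t\to-\infty$; the displacement grows linearly only if $\kappa_\infty>0$. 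If $\kappa_\infty=0$, the width can a priori be unbounded but sublinear (e.g.\ like $\log(-t)$ or $(-t)^{1/4}$), and no translator forms in any translated limit. Note also that the translator asymptotics of Lemma \ref{lem:asymptotic translator noncompact} are themselves proved under the hypothesis that the solution lies in a strip, which is precisely what is to be shown, so they cannot be invoked here. Ruling out this intermediate regime of unbounded sublinear width growth is the entire technical content of the lemma.

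The paper does this in two steps that your plan does not reproduce. First (Claim \ref{claim:rectangle}), the near-horizontality of the central tangents coming from \eqref{eq:conv1} forces the total curvature of each half of $\Gamma_t$ to be close to $\pi$, whence the enclosed area on one side grows at least like $-t$; trapping $\Omega_t$ between the central tangent lines then gives the product bound $|a^\pm_t|\,v(0,t)\gtrsim -t$. Second (Claim \ref{claim:thin}), one works on dyadic time scales $t_k=2^kt_\e$ and shows that the increments $v(0,t_{k+1})-v(0,t_k)$ are bounded by a summable series $\sqrt{-t_\e}\,2^{-k/4}$. This is where the concavity of the arrival time enters quantitatively: concavity of $t\mapsto v(y,t)$ yields the monotonicity \eqref{vconcavity} of $v/(-t)$ and the bound $|\partial_t v|\le v/(-t)$; concavity in $y$ together with Claim \ref{claim:rectangle} yields gradient bounds; and a measure-theoretic selection of points where $\partial_y^2v$ is small lets one control $\partial_t v$ via the graphical flow equation \eqref{CSFu} on most of $[t_{k+1},t_k]$. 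None of this is a routine matter of ``playing parabolic growth against vertical spread'': the concavity estimate $u_{ij}\le C(-t)^{-1}\delta_{ij}$ of Theorem \ref{claim:convexity} alone is consistent with unbounded width, and the lower bound $-u(p)\gtrsim(\text{distance})^2/(\cdots)$ you gesture at is not derived and points in the wrong direction. As written, your proposal defers exactly the decisive step to ``following Wang closely,'' so it does not constitute a proof.
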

\begin{proof} 
After a rotation, we can assume that $\{\lambda\Gamma_{\lambda^{-2}t}\}_{t\in(-\infty,0)}$ converges as $\lambda\to 0$ to the $y$-axis with multiplicity $2$. In the non-compact case, %this implies that $\Gamma_t$ is a graph over some portion of the $x$-axis with unbounded gradient. In particular, after reflecting along the $x$-axis if necessary, $N=(-\tfrac\pi2, \tfrac\pi2)$.
after reflecting along the $x$-axis if necessary, $0\in N_t\to (-\frac{\pi}{2},\frac{\pi}{2})$ as $t\to-\infty$. %Let also $\Omega_t$ be the convex domain in $\R^2$ with boundary $\Gamma_t$. 
For each $t$, we define the points $p^-_t=\gamma(0,t)\in \Gamma_t\cap\{ y<0\}$, $p^+_t=\gamma(\pi,t)\in \Gamma_t\cap\{y>0\}$ and $ q^\pm_t\in \Gamma_t\cap\{(x,0)|\pm x>0\}$, omitting the point $p^+_t$ in case $\Gamma_t$ is non-compact. The existence of these points is a consequence of the fact that $0\in \Omega_t$ for all $t<0$. Since the blow-down is the $y$-axis with multiplicity two, for any $\e>0$ we can find $t_\e<0$ such that 
%\[
%|q^\pm _t|\le \e\sqrt {-t}\,\,\text{for all}\;\; t<t_\e\,.
%\]
%By comparing the area of $\Omega_t\cap\{y<0\}$ with the rectangle with base $q_-(t)q_+(t)$ and height $p_-(t)$ and, in the compact case, comparing the area of  $\Omega_t\cap\{y>0\}$ with the rectangle with base $q_-(t)q_+(t)$ and height $p_+(t)$, we obtain
\begin{equation}\label{eq:conv1}
|p^\pm_t\cdot e_2|\ge\frac {\sqrt {-t}}{\e}\,\,\text{ and }\,\,|q^\pm_t|\le \e \sqrt {-t}\;\;\text{for all}\;\; t<t_\e\,.
\end{equation}
Define also $a^\pm_t:=y(p^\pm_t)$, with $a^+_t:=\infty$ in the non-compact case. Then $\Gamma_t $ is the union of two graphs over the $y$-axis, 
\[
\Gamma_t =\{(-v^+(y,t), y): y\in [a^-_t, a^+_t]\}\cup\{(-v^-(y,t), y): y\in [a^-_t, a^+_t]\}\,,
\]
%of functions $v_t^{\pm}: [a_t^-, a_t^+]\to\R$ 
with $v^+(\cdot,t):[a^-_t, a^+_t]\to\R$ convex, $v^-:[a^-_t, a^+_t]\to\R$ concave, $(v^\pm (0,t),0)= -p^\pm_t$ and $v^\pm(a^\pm_t,t)=0$.

\begin{center}
\begin{figure}[ht]
\includegraphics[width=0.365\textwidth]{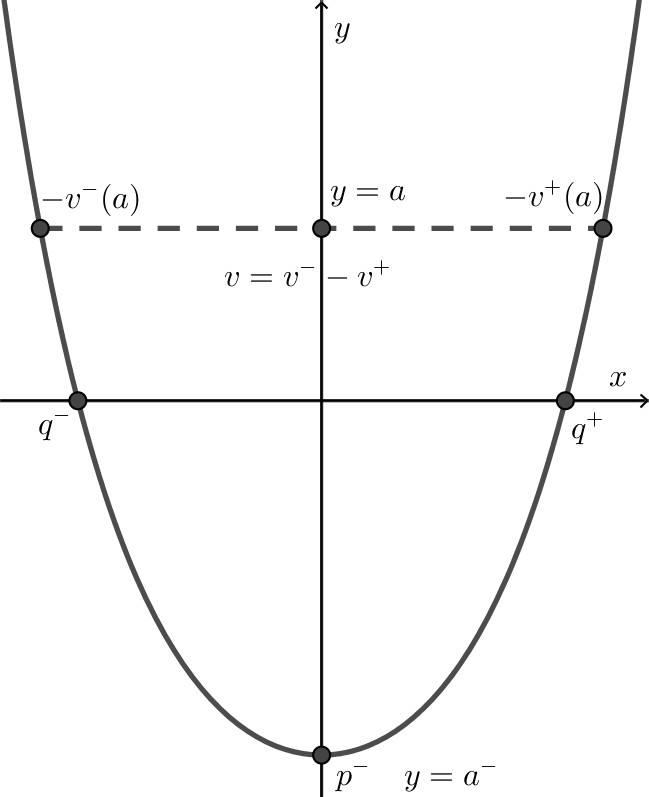}
\caption{\footnotesize Writing the solution as a union of two graphs over the $y$-axis}
\end{figure}
\end{center}

Since their graphs move by curve shortening flow, the functions $v^\pm$ satisfy
\begin{equation}\label{CSFu}
\frac{\partial v^\pm}{\partial t}=\frac{v^\pm_{yy}}{1+(v^\pm_y)^2}=-\k^\pm\sqrt{1+(v_y^\pm)^2}\,,
\end{equation}
where $\k^\pm$ are the curvatures of the respective graphs.

\begin{claim}\cite[Claim 1 in Lemma 2.1 and  Lemma 2.2]{Wa11} \label{claim:rectangle}
There exists $t_0<0$ such that
\[
|a^\pm_t|(v^-(0,t)-v^+(0,t))\ge -\frac{\pi}{4}t\;\;\text{for all}\;\; t \le t_0.
\]
\end{claim}
\begin{proof}
Fix $t<t_\varepsilon$ and assume, without loss of generality, that $|a^-_t|\le |a^+_t|$ in the compact case. By convexity/concavity of the graphs,  the two segments connecting %$v^-(a^\pm_t,t)$
$p^{\pm}_t$ to %$v^-(0,t)$
$q^-_t$ lie below the graph of $v^-$ and the two segments connecting %$v^+(a_t^\pm,t)$
$p^\pm_t$ to %$v^+(0,t)$
$p^+_t$ lie above the graph of $v^+$. Thus, comparing their slopes with the slope of the tangents to $v^\pm(\cdot,t)$ at $0$, we obtain
\begin{align*}
\frac{v^-(0,t)-v^+(0,t)}{|a_t^-|}\ge{}&\frac{\partial v^-}{\partial y}(0,t)-\frac{\partial v^+}{\partial y}(0,t)\\
\ge{}&\frac{-v^-(0,t)+v^+(0,t)}{|a_t^+|}\ge \frac{-v^-(0,t)+v^+(0,t)}{|a_t^-|}
\end{align*}
and hence, using \eqref{eq:conv1}, we find that
\begin{equation}\label{eq:tangent}
\left|\frac{\partial v^-}{\partial y}(0,t)-\frac{\partial v^+}{\partial y}(0,t)\right|\le\frac{v^-(0,t)-v^+(0,t)}{|a_t^-|}\le 2\e^2\,.
\end{equation}
It follows that
\[
\int_{\Gamma_t\cap\{(x,y):x>0\}}\k\,ds\ge \pi-2\e^2\,,
\]
where $\k$ is the curvature and $s$ an arc-length parameter of $\Gamma_t$. Since $\Gamma_t$ moves by curve shortening flow,
\[
\frac{d}{dt}\H^2(\Omega_t\cap \{(x,y):y<0\})=-\int_{\Gamma_t\cap\{(x,y):x>0\}}\k\,ds\le -\pi+2\e^2\,,
\]
where $\Omega_t$ is the convex region bounded by $\Gamma_t$. Integrating this between $t<4t_\e$ and $t_\e$ yields, upon choosing $\e=\frac{\sqrt{\pi}}{3\sqrt 2}$,
\[
\H^2(\Omega_t\cap \{(x,y):y<0\})\ge -\frac{3}{4}(\pi-2\e^2)t\ge -\frac{2\pi}{3}t\,.
\]
By convexity, the region $\Omega_t\cap \{(x,y):y<0\}$ lies between the tangent lines to $v^\pm(\cdot,t)$ at $0$. By \eqref{eq:tangent}, these tangent lines intersect the line $y=a_t^-$ at two points with distance at most $2(v^-(0,t)-v^+(0,t))$. It follows that 
\[
\frac32(v^-(0,t)-v^+(0,t))|a_t^-|\ge \frac\pi 4 (-t)\,,
\]
which finishes the proof of Claim \ref{claim:rectangle}.
\end{proof}

We need to show that the difference $v_t:=v_t^--v_t^+$ stays bounded as $t\to -\infty$. Note that $v(\cdot,t)$ is positive and concave with $v(a_t^\pm,t)=0$ and since, by \eqref{eq:conv1}, $|\a_t^\pm|\to \infty$ as $t\to -\infty$ it suffices to shot that $v_t(0)$ stays bounded as $t\to -\infty$. Set $t_k= 2^{k}t_\e$ for some to-be-determined $\e>0$ and write $a_k^\pm:= a_{t_k}^\pm$, $v_k:=v(\cdot,t_k)$ and $\Gamma_k:=\Gamma_{t_k}$.

\begin{claim}\cite[Claim 1 in Lemma 2.1 and  Lemma 2.2]{Wa11} \label{claim:thin}
There exists  $\e>0$ such that
\begin{equation}\label{eq:thin} 
v_k(0)\le v_{k-1}(0)+\sqrt{-t_\e}2^{-\frac k4}
\end{equation}
for all $k\geq 1$.
\end{claim}
\begin{proof}
By concavity of the arrival time, for each $y$ the functions $t\mapsto v^-(y, t)$ and $t\mapsto-v^+(y,t)$ are concave and hence so is $t\mapsto v(y,t)$. It follows that
\begin{equation}\label{vconcavity}
\frac{d}{dt}\frac{v(y,t)}{-t}\ge 0\;\; \text{for every}\;\; y\in (a_t^-, a_t^+)\,.
\end{equation}
In particular,
\[
\frac{v_k(0)}{-t_k}\le \frac{v_0(0)} {-t_0}\;\;\text{and hence}\;\; v_k(0)\le 2^{k}v_0(0)\le 2^{k+1}\e\sqrt{-t_\e}\,,
\]
where in the last inequality we used \eqref{eq:conv1}. Given $k_0\ge 8$ (to be determined momentarily) we chose $\e=\e(k_0)$ small enough that $2^{k_0+1}\e\le 2^{-\tfrac {k_0}{4}}$, hence
\begin{equation}\label{eq:gk0}
v_k(0)\le 2^{-\tfrac {k_0}{4}}\sqrt{-t_\e}\le \frac{1}{4}\sqrt {-t_\e}\;\; \text{for all}\;\; k\le k_0.
\end{equation}
 In particular,  \eqref{eq:thin} holds for each $k\le k_0$. So suppose that \eqref{eq:thin} holds up to some $k\ge k_0$. Then
\[
v_k(0)\le v_{k_0}(0)+\sqrt{-t_\e}\sum_{i=k_0+1}^k 2^{-\frac i4}\,,
\]
where the second term on the right hand side is taken to be zero if $k=k_0$. Since $\sum_{i=1}^\infty 2^{-\frac i4}<\infty$, we can choose $k_0$ so that $\sum_{i=k_0+1}^\infty 2^{-\frac i4}<1/4$. Applying \eqref{eq:gk0}, we then obtain
\begin{equation}\label{eq:gk}
v_k(0)\le \frac12\sqrt{-t_\e}\,.
\end{equation}
By \eqref{vconcavity},
\[
\frac{v_{k+1}(0)}{-t_{k+1}}\le \frac{v_{k}(0)}{-t_k}\;\;\text{and hence}\;\; v_{k+1}(0)\le 2 v_k(0)\le \sqrt{-t_\e}\,.
\]
Since $t\mapsto v(0,t)$ is decreasing, we conclude that
\[
v(0,t)\le \sqrt{-t_\e}\;\;\text{for every}\;\; t\le t_{k+1}\,.
\]
Using this estimate in Claim \ref{claim:rectangle} along with the monotonicity of the intervals $[a_k^-, a_k^+]$, we obtain
\begin{equation}\label{eq:ak+1}
|a_{k+1}^\pm|\ge |a_{k}^\pm|\ge \frac{-\pi t_k} {4v_{k}(0)}\ge\frac{-\pi}{4\sqrt{-t_\e}}t_k\text{ and } |a_{k-1}^\pm|\ge\frac{-\pi} {4\sqrt {-t_\e}} t_{k-1}= \frac{-\pi}{8\sqrt{-t_\e}}t_k\,.
\end{equation}
Define now 
\[
L_k:=\left\{y\in\R:\frac{\pi}{8\sqrt{-t_\e}} t_k<y< \frac{\pi}{8\sqrt{-t_\e}} (-t_k)\right\}\,.
\]
By the concavity of $y\mapsto v(y,t)$ we find, using also \eqref{eq:ak+1} and \eqref{eq:gk},
\[
\frac{v_k(\pm|y|)}{\pm|y|+|a_k^{\mp}|}\le \frac{v_k(0)}{|a_k^\mp|}\;\;\text{and hence}\;\; v_k(y)\le 2v_k(0)\le \sqrt{-t_\e}\;\; \text{for all}\;\; y\in L_k\,.
\]
By the concavity of  $t\mapsto v(y,t)$ for $y\in L_k$  and the above estimate, we find, for $t\in[t_{k+1}, t_k]$,
\[
v(y,t)\le \frac{t}{t_k}v_k(y)\le 2\sqrt{-t_\e}\,.
\]
These estimates, \eqref{eq:ak+1} and the concavity of  $y\mapsto v(y,t)$ and $t\mapsto v(y,t)$ yield, for all $(y,t)\in L_k\times[a_k, a_{k+1}]$,
\begin{equation}\label{eq:x1der}
|\partial _y v(y, t)|\le \max\left\{\frac{v(y,t)}{a_t^+-|y|}, \frac{v(y,t)}{|a_t^-|-|y|}\right\}\le \frac{16 t_\e }{\pi}\frac{1}{t_k}\le \frac{8 t_\e}{t_k}
\end{equation}
and
\begin{equation}\label{eq:hder}
0\ge \partial_t v(y,t)\ge \frac{v(y,t)}{t}\ge \frac{v_k(y)}{t_k}\ge\frac{2\sqrt{-t_\e}}{t_k}\,.
\end{equation}

Next we want to bound $\partial^2_{y}v(y,t)$ in $L_k\times[t_{k+1}, t_{k}]$ in terms of $t_k$. Let $f:\N\to \R$ be a positive function, to be determined later  and define
\[
\chi=\{(y, t)\in L_k\times[t_{k+1}, t_{k}]:-\partial^2_{y}v(y,h)\ge f(k)\}\,.
\]
By concavity of $v$ we find, for any $t\in (t_k, t_{k+1})$,
\[
\begin{split}
\H^1&(\{y\in L^k:(y, t)\in \chi\}) f(k)\le \int_{L_k}-\partial^2_{y}v(y,t)dy\\
&= -\partial_{y}v\left(\frac{\pi}{8\sqrt{-t_\e}}(-t_k),t\right)+\partial_{y}v\left(\frac{\pi}{8\sqrt{-t_\e}}t_k,t\right)
\le 2\sup_{L_k}|\partial_{y}v(y,t)|\le\frac{16 t_\e}{t_k}\,,
\end{split}
\]
where at the last inequality we used \eqref{eq:x1der}. Integrating between $t_{k+1}$ and $t_k$ yields
\[
\H^2(\chi)\le (t_{k}-t_{k+1}) \frac{16 t_\e}{t_k f(k)}=-\frac{16 t_\e}{f(k)}\,.
\]
Consider now another  positive function $g:\N\to\R$, which will be also determined later. Since
\[
\int_{L_k} \H^1(\chi\cap\{y=z\}) dz=\H^2(\chi)\le -\frac{16 t_\e}{f(k)}\,,\,
\]
there exists $\hat L_{k}\subset L_k$ with $\H^1(\hat L_{k})\le  -\frac{16 t_\e}{f(k)g(k)}$ such that 
\begin{equation}\label{eq:Lsmall}
\H^1(\chi\cap\{y=z\})\le g(k)\;\;\text{for all}\;\; z\in L_k\setminus \hat L_{k}\,.
\end{equation}
Now for any $y\in  L_k\setminus \hat L_{k}$ we have
\begin{equation}\label{Le}
\begin{split}
v(y, t_{k+1})- v(y, t_k)&=\int_{t_{k+1}}^{t_{k}}-\partial_tv(y, h)\\
&=\int_{\chi\cap\{y=z\}}-\partial _tv(y,t)+\int_{[t_{k+1}, t_k]\setminus(\chi\cap\{y=z\})}-\partial _tv(y,t)\,.
\end{split}
\end{equation}
Using \eqref{eq:hder} and \eqref{eq:Lsmall} to bound the first integral on the right hand side of \eqref{Le} and the graphical curve shortening flow equation \eqref{CSFu} to bound the second, we find
\begin{equation}\label{Le2}
\begin{split}
v_{k+1}(y)- v_k(y)\le \frac{-2\sqrt{-t_\e}}{t_k} g(k) -f(k)t_k\,.
\end{split}
\end{equation}
We now choose $f(k)= \frac{2^{-k(1+\beta)}}{\sqrt{-t_\e}}$ and $g(k)= 2^{k(1-\beta)}(- t_\e)$ for some $\beta\in (0,1)$.
Then
\[
\H^1(L_k\setminus \hat L_k)\ge \frac{-\pi t_k}{4\sqrt{-t_\e}} -16 \sqrt{-t_\e}2^{2k\beta}=\frac{\pi}{4}2^k\sqrt{-t_\e}- 16 \sqrt{-t_\e}2^{2k\beta}\,,
\]
which is positive for $k\ge k_0$ sufficiently large.
Without loss of generality, we can assume that $\partial_y v(0,t_{k})\le0$. Moreover, by convexity of the solution and the preceeding measure estimate, it suffices to consider points $y\in L_k\setminus \hat L_k\cap\{0<y<16\sqrt{- t_\e}2^{2k\beta}\}$. Then $v(0,t_k)\ge v(y,t_k)$ and, by the concavity of $y\mapsto v(y,t)$,
\[
\frac{v_{k+1}(0)}{a^+_{k+1}}\le \frac {v_{k+1}(y)}{a^+_{k+1}-y}\,.
\]
Since
\[
\frac {a^+_{k+1}}{a^+_{k+1}-y}\le \frac {a^+_{k+1}}{a^+_{k+1}-16 \sqrt{-t_\e} 2^{2k\beta}}=1-  \frac {16 \sqrt{-t_\e} 2^{2k\beta}}{a^+_{k+1}-16 \sqrt{-t_\e} 2^{2k\beta}}\le 1+17\cdot 2^{k(2\beta-1)}\,,
\]
for $k\ge k_0$ sufficiently large, where in the last step we used \eqref{eq:ak+1}, we obtain 
\[
v_{k+1}(0)\le (1+17\cdot 2^{k(2\beta-1)})v_{k+1}(y)\,.
\]
Applying \eqref{Le2}, estimating $v_k(y)\le v_k(0)$ and recalling \eqref{eq:gk}, we conclude that
\[
\begin{split}
v_{k+1}(0)%&\le (1+ 22 (-t_\e)^{\frac32}(-t_k)^{2\beta-1})(v_{k}(y)+(2\sqrt{-t_\e}+1) t_\k^{-\beta}) \\
&\le (1+17\cdot 2^{k(2\beta-1)})(v_{k}(0)+3\sqrt{-t_\e}2^{-k\beta})\\
& \le v_k(0) +\sqrt{-t_\e}\left(9\cdot 2^{k(2\beta-1)}+ 3\cdot  2^{-k\beta}+51 \cdot 2^{k(\beta-1)}\right)
\end{split}
\]
Choosing now $\beta\in (1/4,1/3)$, we obtain for $k$ sufficiently large
\[
\begin{split}
v_{k+1}(0)&\le  v_k(0)+63 \sqrt{-t_\e}  2^{-k\beta}\le  v_k(0)+\sqrt{-t_\e}  2^{-\frac k4}\,.
\end{split}
\]
which finishes the proof of the claim.
\end{proof}

Taking $k\to\infty$, we conclude that $v$ is bounded uniformly in time, which completes the proof of Lemma \ref{lem:slab}.
\end{proof}

\bibliographystyle{acm}
\bibliography{../pancakes}

\end{document}